\numberwithin{equation}{section}
\numberwithin{figure}{section}
\theoremstyle{plain}
\newtheorem{thm}{\protect\theoremname}
  \theoremstyle{definition}
  \newtheorem{defn}[thm]{\protect\definitionname}
  \theoremstyle{plain}
  \newtheorem{lem}[thm]{\protect\lemmaname}
  \theoremstyle{remark}
  \newtheorem{rem}[thm]{\protect\remarkname}
  \theoremstyle{plain}
  \newtheorem{prop}[thm]{\protect\propositionname}
\theoremstyle{definition}
\newtheorem{parn}{}[section]
  \providecommand{\definitionname}{Definition}
  \providecommand{\lemmaname}{Lemma}
  \providecommand{\propositionname}{Proposition}
  \providecommand{\remarkname}{Remark}
\providecommand{\theoremname}{Theorem}
\begin{document}
\author{Adrien Dubouloz} 
\address{Adrien Dubouloz\\ CNRS\\ Institut de Math\'{e}matiques de Bourgogne\\ Universit\'{e} de Bourgogne\\ 9 Avenue Alain Savary\\ BP 47870\\ 21078 Dijon Cedex\\ France} \email{Adrien.Dubouloz@u-bourgogne.fr} 
\thanks{Research supported in part by ANR Grant "BirPol" ANR-11-JS01-004-01.}

\title{Flexible bundles over rigid affine surfaces}
\begin{abstract}
We construct a smooth rational affine surface $S$ with finite automorphism
group but with the property that the group of automorphisms of the
cylinder $S\times\mathbb{A}^{2}$ acts infinitely transitively on
the complement of a closed subset of codimension at least two. Such
a surface $S$ is in particular rigid but not stably rigid with respect
to the Makar-Limanov invariant. 
\end{abstract}
\maketitle

\section*{Introduction }

A complex affine variety $X$ is called \emph{rigid} if it does not
admit non trivial algebraic actions of the additive group $\mathbb{G}_{a}=\mathbb{G}_{a,\mathbb{C}}$.
This is the case for ``most'' affine varieties, for instance for
every affine curve different from the affine line $\mathbb{A}^{1}$
and for every affine variety whose normalization has non negative
logarithmic Kodaira dimension. The notion was actually introduced
by Crachiola and Makar-Limanov in \cite{CML05} under the more algebraic
equivalent formulation that the \emph{Makar-Limanov invariant} $\mathrm{ML}(X)$
of $X$, which is defined as the algebra consisting of regular functions
on $X$ invariant under all algebraic $\mathbb{G}_{a}$-actions, is
equal to the coordinate ring $\Gamma(X,\mathcal{O}_{X})$ of $X$. 

Among many important questions concerning this invariant, the understanding
of its behavior under the operation consisting of taking cylinders
$X\times\mathbb{A}^{n}$, $n\geq1$, over a given affine variety $X$
has focused a lot of attention during the last decade, in connexion
with the Zariski Cancellation Problem. Of course, rigidity is lost
even when passing to the cylinder $X\times\mathbb{A}^{1}$ since these
admit non trivial $\mathbb{G}_{a}$-actions by translations on the
second factor. But one could expect that such actions are essentially
the unique possible ones in the sense that the projection $\mathrm{pr}_{X}:X\times\mathbb{A}^{1}\rightarrow X$
is invariant for every $\mathbb{G}_{a}$-action on $X\times\mathbb{A}^{1}$,
a property which translates algebraically to the fact that $\mathrm{ML}(X\times\mathbb{A}^{1})=\mathrm{ML}(X)$.
This property was indeed established by Makar-Limanov \cite{MLNotes}
and this led to wonder more generally whether a rigid variety is \emph{stably
rigid} in the sense that the equality $\mathrm{ML}(X\times\mathbb{A}^{n})=\mathrm{ML}(X)$
holds for arbitrary $n\geq1$. Stable rigidity of smooth affine curves
is easily confirmed as a consequence of the fact that a smooth rigid
curve does not admit any dominant morphism from the affine line, and
more generally every rigid affine curve is in fact stably rigid \cite{CML05}.
Stable rigidity is also known to hold for smooth factorial rigid surfaces
by virtue of a result of Crachiola \cite{CraThesis}, and without
any indication of a potential counter-example, it seems that the implicit
working conjecture so far has been that every rigid affine variety
should be stably rigid. 

In this article, we construct a smooth rigid surface $S$ which fails
stable rigidity very badly, the cylinder $S\times\mathbb{A}^{2}$
being essentially as remote as possible from a rigid variety in terms
of richness of $\mathbb{G}_{a}$-actions on it. Here ``richness''
has to be interpreted in the sense of a slight weakening of the notion
of flexibility introduced recently in \cite{AFKKZ12,AKZ12} that we
call \emph{flexibility in codimension one}: a normal affine variety
$X$ is said to be flexible in codimension one if for every closed
point $x$ outside a possibly empty closed subset of codimension two
in $X$, the tangent space $T_{x}X$ of $X$ at $x$ is spanned by
tangent vectors to orbits of $\mathbb{G}_{a}$-actions on $X$. Clearly,
the Makar-Limanov invariant of a variety with this property is trivial,
consisting of constant functions only. Now our main result can be
stated as follows: 
\begin{thm}
\label{thm:Main} Let $V\subset\mathbb{P}^{3}$ be smooth cubic surface
and let $D=V\cap H$ be a hyperplane section of $V$ consisting of
the union of a smooth conic and its tangent line. Then $S=V\setminus D$
is a smooth rigid affine surface whose cylinder $S\times\mathbb{A}^{2}$
is flexible in codimension one.
\end{thm}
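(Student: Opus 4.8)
The plan is to treat the three assertions---smoothness/affineness, rigidity, and flexibility of the cylinder---in turn, the first being routine and the last being the crux.

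First I would record the elementary geometry. Since $D=V\cap H$ is an ample hyperplane section, its complement $S=V\setminus D$ is affine, and it is smooth and rational because $V$ is. Writing $D=C+L$ with $C$ the smooth conic and $L$ its tangent line, adjunction on the del Pezzo surface $V$ (where $K_V\sim -H$) gives $C^2=0$, $L^2=-1$ and $C\cdot L=2$, the last because $C$ and $L$ meet in a single point with contact of order two. In particular $D\sim H\sim -K_V$, so $K_V+D\sim 0$, and the pencil $|C|$ endows $V$ with a conic bundle structure $\pi\colon V\to\mathbb P^1$ for which $C$ is a smooth fibre and $L$ is a bisection tangent to that fibre.

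For rigidity I would argue that $S$ carries no $\mathbb A^1$-fibration over an affine base, which is equivalent to the absence of a nontrivial $\mathbb G_a$-action. Resolving the tangency by blowing up the point $C\cap L$ twice yields $\sigma\colon\tilde V\to V$ with reduced SNC boundary $\bar D=\tilde C+\tilde E_1+E_2+\tilde L$ whose dual graph is a star centred at the $(-1)$-curve $E_2$, with the three arms $\tilde C$, $\tilde E_1$, $\tilde L$ of self-intersections $-2,-2,-3$. A direct discrepancy computation gives $K_{\tilde V}+\bar D\sim -E_2$; hence $\bar\kappa(S)=-\infty$ and $S$ is affine-ruled. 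Now any $\mathbb A^1$-fibration on $S$ extends, on a relatively minimal model of the ruling dominating $\tilde V$, to a $\mathbb P^1$-fibration whose general fibre $F$ meets the boundary in one point (so that $F\cap S\cong\mathbb A^1$), whence $F\cdot(K_{\tilde V}+\bar D)=F\cdot K_{\tilde V}+F\cdot\bar D=-2+1=-1$, forcing $F\cdot E_2=1$ and $F\cdot(\tilde C+\tilde E_1+\tilde L)=0$. Thus $E_2$ is a $1$-section while $\tilde C$, $\tilde E_1$, $\tilde L$ are vertical; since they meet the section $E_2$ in three distinct points they lie in three distinct fibres, and as each has negative self-intersection none of these fibres is contained in $\bar D$. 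Consequently every fibre meets $S$, the base of the fibration is the complete curve $\mathbb P^1$, and the ring of invariants would be reduced to constants, incompatible with a $\mathbb G_a$-action on a surface. Because the boundary tree is not a chain, $S$ is not a Gizatullin surface and this $\mathbb A^1$-fibration is the only one up to an automorphism of the base; the delicate point here is precisely this uniqueness, which rules out a second, affine-type fibration and thereby yields rigidity.

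Finally, for the flexibility of $S\times\mathbb A^2$ I would exploit the very fibration $\rho\colon S\to\mathbb P^1$ found above. Covering $\mathbb P^1$ by the two affine charts $\mathbb P^1\setminus\{0\}$ and $\mathbb P^1\setminus\{\infty\}$, the restriction of $\rho$ over each chart is an $\mathbb A^1$-fibration over an affine line and therefore carries a $\mathbb G_a$-action; these give locally nilpotent derivations of $\mathcal O(S)$ that are only rational, with poles along the fibres $F_\infty$ and $F_0$ respectively. On the cylinder $\mathcal O(S\times\mathbb A^2)=\mathcal O(S)[u,v]$ I would regularize them in the manner of \cite{AFKKZ12}: composing with shears in the two extra coordinates absorbs the poles at the section at infinity and produces honest locally nilpotent derivations of $\mathcal O(S)[u,v]$. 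Assembling the $\mathbb G_a$-actions arising from the two charts, I would check that at every point outside the closed subset sitting over the finitely many degenerate fibres---a set of codimension at least two---their velocity vectors span the tangent space, giving flexibility in codimension one, and that the group they generate acts infinitely transitively there by the transitivity criterion of \cite{AFKKZ12,AKZ12}. The heart of the matter is this globalization over the complete base: no such derivation exists on $S$ alone, by rigidity, and the two affine-line factors are exactly what is needed to trade the missing section $E_2$ at infinity for genuine $\mathbb G_a$-actions on the cylinder, after which the spanning of tangent spaces at general points must be verified directly.
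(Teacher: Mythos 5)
Your proposal has a genuine gap at its central step, the $1$-flexibility of $S\times\mathbb{A}^{2}$. You correctly identify that the locally nilpotent derivations produced by restricting the $\mathbb{A}^{1}$-fibration $\rho:S\rightarrow\mathbb{P}^{1}$ over the two affine charts are only rational on $S$, with poles along the fibre over the removed point, but the claim that ``composing with shears in the two extra coordinates absorbs the poles'' and yields honest locally nilpotent derivations of $\mathcal{O}(S)[u,v]$ is asserted, not proved, and there is no known mechanism of this kind. A sanity check shows something essential is missing: Makar-Limanov's theorem (recalled in Section 1) gives $\mathrm{ML}(S\times\mathbb{A}^{1})=\mathrm{ML}(S)=\mathcal{O}(S)$, so no such regularization can work with one extra variable, and your argument offers no reason why two variables should suddenly suffice. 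The paper's actual route is entirely different and genuinely global: one builds an auxiliary $1$-flexible surface $S_{F}$ whose $\mathbb{A}^{1}$-fibration over $\mathbb{P}^{1}$ has degenerate fibres matching those of $\pi_{R}$, shows (Proposition \ref{prop:Factor}) that both fibrations factor through \'etale locally trivial $\mathbb{A}^{1}$-bundles over a common non-separated algebraic space $\mathfrak{C}$, and applies the Danielewski fibre product trick: $Z=S_{R}\times_{\mathfrak{C}}S_{F}$ is simultaneously a line bundle over $S_{R}$ and over $S_{F}$, hence $1$-flexible by Lemma \ref{lem:Flex-lift} applied to the flexible side. The reason two extra variables are needed is then Murthy's splitting theorem: the line bundle $Z\rightarrow S_{R}$ is non-trivial, but $Z'\times_{S_{R}}Z$ with $Z'$ its inverse is a rank-two bundle with trivial determinant, hence trivial, i.e.\ $S_{R}\times\mathbb{A}^{2}$. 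None of this machinery is replaceable by a local pole-absorption argument.

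On rigidity your route differs from the paper's and is partly salvageable but contains a false assertion. The paper proves rigidity by showing $\mathrm{Aut}(S_{R})$ is finite via a birational analysis on a log resolution of $(V,D)$. Your computation of the boundary star $\tilde{C}+\tilde{E}_{1}+E_{2}+\tilde{L}$ and of $K_{\tilde V}+\bar D\sim-E_{2}$ is correct, and the conclusion that every $\mathbb{A}^{1}$-fibration on $S$ must have $E_{2}$ as its unique horizontal boundary component, hence complete base $\mathbb{P}^{1}$, is the right idea for excluding $\mathbb{G}_{a}$-actions (though the intersection computation must be carried out on a model where the pencil is base-point free, where $K+\bar D$ is no longer $-E_{2}$ on the nose). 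However, your claim that ``this $\mathbb{A}^{1}$-fibration is the only one up to an automorphism of the base'' is false: Remark \ref{Rk:Alt-SrFib} exhibits a second, non-isomorphic $\mathbb{A}^{1}$-fibration $\pi'_{R}:S_{R}\rightarrow\mathbb{P}^{1}$. Fortunately your rigidity argument does not actually need uniqueness, only that no fibration has affine base, so this is repairable; but the flexibility gap above is not.
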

\noindent A noteworthy by-product is that while the automorphism
group $\mathrm{Aut}(S)$ of $S$ is finite, actually isomorphic to
$\mathbb{Z}/2\mathbb{Z}$ if the cubic surface $V$ is chosen general,
Theorem 0.1 in \cite{AFKKZ12} implies that $\mathrm{Aut}(S\times\mathbb{A}^{2})$
acts infinitely transitively on the complement of a closed subset
of codimension at least two in $S\times\mathbb{A}^{2}$. \\

Our construction is inspired by earlier work of Bandman and Makar-Limanov
\cite{BML08} which actually already contained the basic ingredients
to construct a counter-example to stable rigidity, in the form of
a lifting lemma for $\mathbb{G}_{a}$-actions which asserts that if
$\mathrm{q}:Z\rightarrow Y$ is a line bundle over a normal affine
variety $Y$ then $\mathrm{ML}(Z)\subseteq\mathrm{ML}(Y)$, and an
example of a non trivial line bundle $\mathrm{p}:L\rightarrow\tilde{S}$
over a smooth rational rigid affine surface $\tilde{S}$ for which
$\mathrm{ML}(L)\subsetneqq\mathrm{ML}(\tilde{S})$. Indeed, with these
informations, the property that $\mathrm{ML}(\tilde{S}\times\mathbb{A}^{2})$
is a proper sub-algebra of $\mathrm{ML}(\tilde{S})$ could have been
already deduced as follows: letting $\mathrm{p}':L'\rightarrow\tilde{S}$
be a line bundle representing the class of the inverse of $L$ in
the Picard group of $\tilde{S}$, the lifting lemma applied to the
rank $2$ vector bundle $E=L\oplus L'=L\times_{\tilde{S}}L'\rightarrow\tilde{S}$
considered as a line bundle over $L$ via the first projection implies
that $\mathrm{ML}(E)\subseteq\mathrm{ML}(L)\subsetneqq\mathrm{ML}(\tilde{S})$.
But combined with a result of Pavaman Murthy \cite{Mur69} which asserts
in particular that every vector bundle on such a surface $\tilde{S}$
is isomorphic to the direct sum of its determinant and a trivial bundle,
the construction of $E$ guarantees that it is isomorphic to the trivial
bundle $\tilde{S}\times\mathbb{A}^{2}$ and hence that $\mathrm{ML}(\tilde{S}\times\mathbb{A}^{2})\subsetneqq\mathrm{ML}(\tilde{S})$.
\\

Noting that the aforementioned of Pavaman Murthy also applies to surfaces
$S$ as in Theorem \ref{thm:Main}, the same construction can be used
for its proof provided that such an $S$ admits a line bundle $\mathrm{p}:L\rightarrow S$
whose total space is flexible in codimension one, and that flexibility
in codimension one lifts to total spaces of line bundles. The lifting
property follows easily from the fact that every line bundle admits
$\mathbb{G}_{a}$-linearizations, but the existence of a line bundle
$\mathrm{p}:L\rightarrow S$ with the desired property is trickier
to establish. To construct such a bundle, we exploit the fact that
$S$ admits an $\mathbb{A}^{1}$-fibration $\pi:S\rightarrow\mathbb{P}^{1}$,
i.e. a faithfully flat morphism with generic fiber isomorphic to affine
line. The strategy then consists in constructing a suitable $\mathbb{A}^{1}$-fibered
affine surface $\pi_{F}:S_{F}\rightarrow\mathbb{P}^{1}$ flexible
in codimension one and to which a variant of the famous Danielewski
fiber product trick \cite{Dan89} can be applied to derive the existence
of an affine threefold flexible in codimension one and carrying simultaneously
the structure of a line bundle over $S$ and $S_{F}$. \\

The article is organized as follows. In the first section we review
basic results about rigid and flexible affine varieties, with a particular
focus on the case of affine surfaces, and we establish that flexibility
in codimension one does indeed lift to total spaces of line bundles
(see Lemma \ref{lem:Flex-lift}). Section two is devoted to the study
of the class of affine surfaces $S$ considered in Theorem \ref{thm:Main}
and the construction of their aforementioned flexible mates $S_{F}$.
The appropriate variant of the Danielewski fiber product trick needed
to achieve the proof of Theorem \ref{thm:Main} is discussed in the
last section.

\section{Preliminaries on (stable) rigidity and flexibility }

\subsection{Rigid and flexible affine varieties}

\indent\newline\noindent Given a normal complex affine variety $X=\mathrm{Spec}(A)$,
we denote by $\mathcal{D}er_{\mathbb{C}}(\mathcal{O}_{X})\simeq\mathcal{H}om_{X}(\Omega_{X/\mathbb{C}}^{1},\mathcal{O}_{X})$
the sheaf of germs of $\mathbb{C}$-derivations from $\mathcal{O}_{X}$
to itself. It is a coherent sheaf of $\mathcal{O}_{X}$-modules whose
global sections coincide with elements of the $A$-module $\mathrm{Der}_{\mathbb{C}}(A)$
of $\mathbb{C}$-derivations of $A$. We denote by $\mathrm{LND}_{\mathbb{C}}(A)$
the sub-$A$-module of $\mathrm{Der}_{\mathbb{C}}(A)$ generated by
locally nilpotent $\mathbb{C}$-derivations, i.e. $\mathbb{C}$-derivations
$\partial:A\rightarrow A$ for which every element of $A$ is annihilated
by a suitable power of $\partial$. Recall that such derivations coincide
precisely with velocity vector fields of $\mathbb{G}_{a}$-actions
on $X$ (see e.g. \cite{FreuBook}). 
\begin{defn}
A normal affine variety $X=\mathrm{Spec}(A)$ is called:

a) Rigid if $\mathrm{LND}_{\mathbb{C}}(A)=\left\{ 0\right\} $, equivalently
$X$ does not admit non trivial $\mathbb{G}_{a}$-actions, 

b) Flexible in codimension $1$, or $1$-flexible for short, if the
support of the co-kernel of the natural homomorphism $\mathrm{LND}_{\mathbb{C}}(A)\otimes_{A}\mathcal{O}_{X}\rightarrow\mathcal{D}er_{X}(\mathcal{O}_{X})$
has codimension at least $2$ in $X$. 
\end{defn}
\begin{parn} The above definition of $1$-flexibility says equivalently
that there exists a closed subset $Z\subset X$ of codimension at
least $2$ such that the restriction of $\mathcal{D}er_{\mathbb{C}}(\mathcal{O}_{X})$
over $X\setminus Z$ is generated by elements of $\mathrm{LND}_{\mathbb{C}}(A)$.
A closed point $x\in X$ at which the natural homomorphism $\mathrm{LND}_{\mathbb{C}}(A)\otimes_{A}\mathcal{O}_{X,x}\rightarrow\mathcal{D}er_{X}(\mathcal{O}_{X})_{x}$
is surjective is called a flexible point of $X$, this property being
equivalent by virtue of Nakayama Lemma to the fact that the Zariski
tangent space $T_{x}X$ of $X$ at $x$ is spanned by the tangent
vectors to orbits of $\mathbb{G}_{a}$-actions on $X$. The set $X_{\mathrm{flex}}$
of flexible points is contained in the regular locus $X_{\mathrm{reg}}$
of $X$ and is invariant under the action of the automorphism group
$\mathrm{Aut}(X)$ of $X$. In particular, if there exists a flexible
point $x\in X$ such that the complement of the $\mathrm{Aut}(X)$-orbit
of $x$ is contained in a closed subset of codimension at least two,
then $X$ is flexible in codimension $1$. 

\end{parn}

\begin{parn} We warn the reader that our definition of flexibility
for a normal affine variety $X$ is weaker than the one introduced
earlier in \cite{AFKKZ12,AKZ12} which asks in addition that $X_{\mathrm{flex}}=X_{\mathrm{reg}}$.
Since for a $1$-flexible variety the set $X\setminus X_{\mathrm{flex}}$
has codimension at least two in $X$, this makes essentially no difference
for global properties of $X$ depending on regular functions, for
instance the Makar-Limanov invariant of a $1$-flexible affine variety
is trivial. Furthermore, all the properties of the regular locus of
a flexible variety in the sense of \emph{loc. cit.} hold for the open
subset $X_{\mathrm{flex}}$ of a $1$-flexible variety $X$, for instance
the sub-group of $\mathrm{Aut}(X)$ generated by its one-parameter
unipotent sub-groups acts infinitely transitively on $X_{\mathrm{flex}}$.

\end{parn}

\noindent Clearly, the only $1$-flexible affine curve is the affine
line $\mathbb{A}^{1}$. While the classification of flexible affine
surfaces in the stronger sense of \cite{AFKKZ12,AKZ12} is not known
and most probably quite intricate, $1$-flexible surfaces coincide
with the so-called \emph{Gizatullin surfaces} \cite{Giz71} with non
constant invertible functions. More precisely, we have the following
characterization (see also \cite[Example 2.3]{AFKKZ12}). 
\begin{thm}
\label{thm:Giz-Flex} For a normal affine surface $S$, the following
are equivalent:

a) $S$ is $1$-flexible, 

b) $S$ admits two $\mathbb{A}^{1}$-fibrations over $\mathbb{A}^{1}$
with distinct general fibers, 

c) $\Gamma(S,\mathcal{O}_{S}^{*})=\mathbb{C}^{*}$ and $S$ admits
a normal projective completion $S\hookrightarrow V$ whose boundary
is a chain of proper smooth rational curves supported on the regular
locus of $V$. \end{thm}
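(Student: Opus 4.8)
The plan is to establish the cyclic chain of implications a) $\Rightarrow$ b) $\Rightarrow$ c) $\Rightarrow$ a), routing the two geometrically substantial steps through the classical structure theory of Gizatullin surfaces and the flexibility results of \cite{AFKKZ12}, while handling the bookkeeping on units and locally nilpotent derivations directly.

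For a) $\Rightarrow$ b), I would first record that $\Gamma(S,\mathcal{O}_S^*)=\mathbb{C}^*$: since invertible functions are annihilated by every locally nilpotent derivation, they lie in $\mathrm{ML}(S)$, which is trivial for the $1$-flexible surface $S$ as noted above. Next I pick a flexible point $x$ in the two-dimensional regular locus. By the definition of $1$-flexibility and Nakayama's lemma, the velocity vectors at $x$ of genuine locally nilpotent derivations span the two-dimensional space $T_xS$, so there exist two such derivations $\partial_1,\partial_2$ whose values at $x$ are linearly independent. Each $\partial_i$ is the velocity field of a $\mathbb{G}_a$-action and yields an $\mathbb{A}^1$-fibration $\pi_i\colon S\to C_i=\mathrm{Spec}(\ker\partial_i)$ onto a smooth affine curve; independence of the velocities at the general point $x$ prevents $\pi_1$ and $\pi_2$ from sharing a general fiber. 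Finally $S$ is rational, hence so is each $C_i$, and triviality of $\Gamma(S,\mathcal{O}_S^*)$ transfers through $\pi_i^*$ to show each $C_i$ is a smooth rational affine curve with only constant units, that is $C_i\cong\mathbb{A}^1$. This gives b).

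For b) $\Rightarrow$ c), triviality of the units is elementary: a unit of $\Gamma(S,\mathcal{O}_S)$ is constant on the general fibers $\cong\mathbb{A}^1$ of one fibration, hence descends to a rational function on $\mathbb{A}^1$ that is simultaneously regular and invertible, therefore constant. For the completion I would extend the first fibration $\pi_1\colon S\to\mathbb{A}^1$ to a $\mathbb{P}^1$-fibration on a normal projective completion of $S$ that is smooth along the boundary, analyze the resulting boundary divisor into its horizontal and vertical components, and use the second independent fibration $\pi_2$ to remove, by elementary transformations and contractions, any branching in the boundary, bringing it to the form of a chain of smooth rational curves. This is precisely Gizatullin's structure theorem \cite{Giz71}, which I would invoke rather than reprove. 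For c) $\Rightarrow$ a), a normal affine surface carrying a chain completion supported on the regular locus and with $\Gamma(S,\mathcal{O}_S^*)=\mathbb{C}^*$ is a Gizatullin surface; by the Gizatullin--Danilov theory \cite{Giz71} the subgroup of $\mathrm{Aut}(S)$ generated by its one-parameter unipotent subgroups acts with an open orbit whose complement is a finite set of points, hence of codimension two. By \cite{AFKKZ12} every point of such an orbit is flexible, so $S\setminus S_{\mathrm{flex}}$ has codimension at least two and $S$ is $1$-flexible.

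I expect the main obstacle to be the geometric heart of the passage between b) and c): producing a chain completion from two independent rulings and, conversely, reading off the large group of $\mathbb{G}_a$-actions with an open orbit from the chain. This is exactly the content of Gizatullin's structure theory and the open-orbit theorem, which I would cite from \cite{Giz71}; granting these, together with the flexibility criterion of \cite{AFKKZ12} and the codimension-two orbit observation recalled above, the remaining verifications on units and locally nilpotent derivations are routine.
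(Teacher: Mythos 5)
Your proposal is correct in substance, but it closes the loop along a genuinely different path than the paper. The paper proves the equivalence a)\,$\Leftrightarrow$\,b) by a self-contained two-way argument and then simply cites \cite{Dub04} for b)\,$\Leftrightarrow$\,c); in particular its proof of b)\,$\Rightarrow$\,a) is elementary: the morphism $q_1\times q_2:S\rightarrow\mathbb{A}^2$ is quasi-finite by \cite[Lemma 2.21]{Dub04}, which simultaneously forces general orbits of the two $\mathbb{G}_a$-actions to meet transversally and the intersection $S_0$ of their fixed loci to be finite, and then any point of $S\setminus S_0$ is moved by the group generated by the two actions to a transversal intersection point, which is visibly flexible. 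This has the advantage of exhibiting the non-flexible locus explicitly as the finite set $S_0$ and of not invoking any deep classification. You instead route the return implication through c)\,$\Rightarrow$\,a) via Gizatullin's quasi-homogeneity theorem (open orbit of the unipotent-generated subgroup with finite complement) combined with the fact from \cite{AFKKZ12} that points of such an open orbit are flexible; this is a valid alternative, but it rests on heavier machinery, and you should note that Gizatullin's theorem has exceptional cases ($\mathbb{C}^*\times\mathbb{C}^*$, $\mathbb{A}^1\times\mathbb{C}^*$, \dots) which must be ruled out by the unit hypothesis in c), and that you need the open-orbit statement for the subgroup generated by one-parameter unipotent subgroups, not merely for $\mathrm{Aut}(S)$. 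Two smaller points: in a)\,$\Rightarrow$\,b) your claim that ``$S$ is rational, hence so is each $C_i$'' is not needed and is slightly circular as stated; the cleaner argument (the paper's) is that each base curve $C_i$ is dominated by a general fiber $\simeq\mathbb{A}^1$ of the other fibration, whence $C_i\simeq\mathbb{A}^1$. Also, your assertion that independence of the velocity vectors at the single flexible point $x$ forces the two fibrations to have distinct general fibers deserves a sentence: if the general fibers coincided then $\ker\partial_1=\ker\partial_2$ and the two orbits through $x$ would be open in the same irreducible curve, contradicting transversality at $x$.
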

\begin{proof}
It is well known that every $\mathbb{A}^{1}$-fibration $q:S\rightarrow C$
over a smooth affine curve $C$ arises as the algebraic quotient morphism
$q:S\rightarrow S/\negmedspace/\mathbb{G}_{a}=\mathrm{Spec}(\Gamma(S,\mathcal{O}_{S})^{\mathbb{G}_{a}})$
of a non trivial $\mathbb{G}_{a}$-action on $S$. In particular,
the general fibers of such fibrations coincide with the general orbits
of a $\mathbb{G}_{a}$-action on $S$. Since a flexible surface admits
at least two $\mathbb{G}_{a}$-actions with distinct general orbits,
this provides two $\mathbb{A}^{1}$-fibrations on $S$ with distinct
general fibers and whose respective base curves are isomorphic to
$\mathbb{A}^{1}$ due to the fact that they are dominated by a general
fiber of the other fibration. Conversely, let $q_{i}:S\rightarrow\mathbb{A}^{1}$,
$i=1,2$, be $\mathbb{A}^{1}$-fibrations on $S$ associated with
a pair of $\mathbb{G}_{a}$-actions $\sigma_{1}$ and $\sigma_{2}$
on $S$ with distinct general orbits. Since the morphism $q_{1}\times q_{2}:S\rightarrow\mathbb{A}^{2}$
is quasi-finite \cite[Lemma 2.21]{Dub04}, it follows on the one hand
that general orbits of $\sigma_{1}$ and $\sigma_{2}$ intersect each
other transversally and on the other hand that the intersection $S_{0}$
of the fixed point loci of $\sigma_{1}$ and $\sigma_{2}$ is finite.
This implies in turn that every point in $S\setminus S_{0}$ can be
mapped by an element of the sub-group of $\mathrm{Aut}(S)$ generated
by $\sigma_{1}$ and $\sigma_{2}$ to a point $p\in S$ at which a
general orbit of $\sigma_{1}$ intersects a general orbit of $\sigma_{2}$
transversally. Such a point $p$ is certainly flexible. Therefore
every point outside the finite closed subset $S_{0}$ is a flexible
point of $S$ which proves the equivalence between a) and b). For
the equivalence b)$\Leftrightarrow$c) we refer the reader to \cite{Dub04}
(in which the statement of Theorem 2.4 should actually be corrected
to read: A normal affine surface with no non constant invertible functions
is completable by a zigzag if and only if it admits two $\mathbb{A}^{1}$-fibrations
whose general fibers do not coincide).
\end{proof}
\goodbreak

\subsection{Stable rigidity/stable flexibility.}

\subsubsection{Rigidity property for line bundles }

\begin{parn} \label{par:Ga-Base-lift} The total space of a line
bundle $\mathrm{p}:L\rightarrow X$ over an affine affine variety
$X=\mathrm{Spec}(A)$ always admits $\mathbb{G}_{a}$-actions by generic
translations along the fibers of $\mathrm{p}$, associated with locally
nilpotent $A$-derivations of $\Gamma(L,\mathcal{O}_{L})$. More precisely,
these derivations corresponds to $\mathbb{G}_{a,X}$-actions on $L$,
i.e. $\mathbb{G}_{a}$-actions on $L$ by $X$-automorphisms, and
are in one-to-one correspondence with global sections $s\in H^{0}(X,L)$
of $L$. Indeed, letting $\mathrm{p}:L=\mathrm{Spec}(\mathrm{Sym}(M^{\vee}))\rightarrow X$,
where $M\simeq H^{0}(X,L)$ is a locally free $A$-module of rank
$1$, one has $\Omega_{\mathrm{Sym}(M^{\vee})/A}^{1}\simeq\mathrm{Sym}(M^{\vee})\otimes_{A}M^{\vee}$
and the isomorphism 
\[
\mathrm{Der}_{A}(\mathrm{Sym}(M^{\vee}))\simeq\mathrm{Hom}_{\mathrm{Sym}(M^{\vee})}(\Omega_{\mathrm{Sym}(M^{\vee})/A}^{1},\mathrm{Sym}(M^{\vee}))\simeq\mathrm{Sym}(M^{\vee})\otimes_{A}M
\]
identifies $A$-derivations of $\Gamma(L,\mathcal{O}_{L})\simeq\mathrm{Sym}(M^{\vee})$
with global sections of the pull-back $\mathrm{p}^{*}L$ of $L$ to
its total space. Since a $\mathbb{G}_{a,X}$-action on $L$ corresponding
to a locally nilpotent $A$-derivation $\partial$ of $\mathrm{Sym}(M^{\vee})$
restricts on every fiber of $\mathrm{p}:L\rightarrow X$ to a $\mathbb{G}_{a}$-action
which is either trivial or a translation, it follows that the corresponding
section of $\mathrm{p}^{*}L$ is constant along the fibers of $\mathrm{p}:L\rightarrow X$
whence is the pull-back by $\mathrm{p}$ of a certain section $s_{\partial}\in H^{0}(X,L)$.
Consersely, every global section $s\in H^{0}(X,L)$ gives rise to
a $\mathbb{G}_{a,X}$-action on $L$ defined by $\sigma_{s}(t,\ell)=\ell+ts(\mathrm{p}(\ell))$
where the fiberwise addition and multiplication are given by the vector
space structure. More formally, viewing $\mathrm{p}:L\rightarrow X$
as a locally constant group scheme for the law $\mu:L\times_{X}L\rightarrow L$
induced by the addition of germs of sections, global sections $s\in H^{0}(X,L)$
give rise to homomorphisms $s:\mathbb{G}_{a,X}\rightarrow L$ of group
schemes over $X$ whence to $\mathbb{G}_{a,X}$-actions $\sigma_{s}=\mu\circ(s\times\mathrm{id}_{L}):\mathbb{G}_{a,X}\times_{X}L\rightarrow L$
on $L$.

\end{parn}

\begin{parn} Even though they are no longer rigid, it is natural
to wonder whether total spaces of line bundles $\mathrm{p}:L\rightarrow X$
over rigid varieties $X$ stay ``as rigid as possible'' in the sense
that they do not admit any $\mathbb{G}_{a}$-actions besides the $\mathbb{G}_{a,X}$-actions
described above. For the trivial line bundle $\mathrm{pr}_{X}:X\times\mathbb{A}^{1}\rightarrow X$,
the question was settled affirmatively by Makar-Limanov \cite{MLNotes}
(see also \cite[Proposition 9.23]{FreuBook}). Let us briefly recall
the argument for the convenience of the reader: viewing $\Gamma(X\times\mathbb{A}^{1},\mathcal{O}_{X\times\mathbb{A}^{1}})=A[x]=\bigoplus_{i\geq0}A\cdot x^{i}$
as a graded $A$-algebra, every nonzero locally nilpotent derivation
$\partial$ of $A[x]$ associated with a non trivial $\mathbb{G}_{a}$-action
on $X\times\mathbb{A}^{1}$ decomposes into a finite sum $\partial=\sum_{i\in\mathbb{Z}}\partial_{i}$
of nonzero homogeneous derivations $\partial_{i}:A[x]\rightarrow A[x]$
of degree $i\in\mathbb{Z}$, the top homogeneous component $\partial_{m}$
being itself locally nilpotent. Note that $m\geq-1$ for a nonzero
derivation and that derivations of the form $a\partial_{x}$ for a
certain $a\in A\setminus\{0\}$ correspond to the case $m=-1$. On
the other hand, if $m\geq0$ then $\partial_{m}=x^{m}\tilde{\partial}_{0}$
for a certain derivation of degree $0$ and since $\partial_{m}(x)\in x^{m+1}A\subset xA$,
$x$ must belong to the kernel of $\partial_{m}$. This implies that
$\tilde{\partial}_{0}$ is a locally nilpotent derivation of degree
$0$ whose restriction to $A=A\cdot x^{0}\subset A[x]$ is trivial
as $X$ is rigid. But since since $x\in\mathrm{Ker}(\tilde{\partial}_{0})=\mathrm{Ker}(\partial_{m})$,
$\tilde{\partial}_{0}$ whence $\partial$ would be the zero derivation,
a contradiction.

\end{parn}

\begin{parn} In contrast, as mentioned in the introduction, it was
discovered by Bandman and Makar-Limanov \cite{BML08} that the above
property can fail for non trivial line bundles. The fact that the
rigid surfaces considered in Theorem \ref{thm:Main} admit line bundles
$\mathrm{p}:L\rightarrow S$ with $1$-flexible total spaces (see
\S \ref{par:end_of_proof} below) shows that such total spaces can
be in general very far from being rigid. 

\end{parn}

\subsubsection{Lifting flexibility in codimension one to split vector bundles}

\begin{parn} The total space of the trivial line bundle $\mathrm{pr}_{X}:X\times\mathbb{A}^{1}\rightarrow X$
over a $1$-flexible (resp. flexible in the sense of \cite{AKZ12})
affine variety $X=\mathrm{Spec}(A)$ is again $1$-flexible (resp.
flexible). Indeed, every locally nilpotent derivation $\partial$
of $A$ canonically extends to a locally nilpotent derivation $\tilde{\partial}$
of $A[x]$ containing $x$ in its kernel in such way that the projection
$\mathrm{pr}_{X}:X\times\mathbb{A}^{1}\rightarrow X$ is equivariant
for the corresponding $\mathbb{G}_{a}$-actions on $X$ and $X\times\mathbb{A}^{1}$
respectively. It follows that for every point $p\in X\times\mathbb{A}^{1}$
dominating a flexible point $x$ of $X$, say for which $\mathcal{D}er_{\mathbb{C}}(\mathcal{O}_{X})_{x}$
is generated by the images of locally nilpotent derivations $\partial_{1},\ldots,\partial_{r}$
of $A$, the $\mathcal{O}_{X\times\mathbb{A}^{1},p}$-module $\mathcal{D}er_{\mathbb{C}}(\mathcal{O}_{X\times\mathbb{A}^{1}})_{p}$
is generated by the images of $\tilde{\partial}_{1},\ldots,\tilde{\partial}_{r}$
together with the image of the locally nilpotent $A$-derivation $\partial_{x}$
of $A[x]$. This implies that $\mathrm{pr}_{X}^{-1}(X_{\mathrm{flex}})\subset(X\times\mathbb{A}^{1})_{\mathrm{flex}}$
and hence that the set of non flexible points in $X\times\mathbb{A}^{1}$
has codimension at least two. Furthermore, $(X\times\mathbb{A}^{1})_{\mathrm{flex}}$
coincides with $(X\times\mathbb{A}^{1})_{\mathrm{reg}}$ in the case
where $X_{\mathrm{flex}}=X_{\mathrm{reg}}$. 

\end{parn}

\begin{parn} Even though different results related with lifts of
$\mathbb{G}_{a}$-actions on an affine variety $X$ to $\mathbb{G}_{a}$-actions
on total spaces of line bundles $\mathrm{p}:L\rightarrow X$ over
it exist in the literature (in particular, \cite[Lemma 9]{BML08}
and \cite[Corollary 4.5]{AFKKZ12}), it seems that the question whether
$1$-flexibility or flexibility of $X$ lifts to total spaces of arbitrary
line bundles over it has not been clearly settled yet. This is fixed
by the following cost free generalization: 

\end{parn}
\begin{lem}
\label{lem:Flex-lift} Let $X$ be a normal affine variety and let
$\mathrm{p}:E\rightarrow X$ be a vector bundle which splits as a
direct sum of line bundles. If $X$ is $1$-flexible (resp. flexible)
then so is the total space of $E$. \end{lem}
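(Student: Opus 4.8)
The plan is to show directly that, at every point of $E$ lying over a flexible point of $X$, the tangent space is spanned by tangent vectors to $\mathbb{G}_{a}$-orbits, by combining ``horizontal'' lifts of the $\mathbb{G}_{a}$-actions on $X$ with ``vertical'' translations along the fibers. Write $E=\bigoplus_{i=1}^{n}L_{i}$ as a direct sum of line bundles $\mathrm{p}_{i}:L_{i}\rightarrow X$, with bundle projection $\mathrm{p}:E\rightarrow X$. I would first record the formal preliminaries: since $X=\mathrm{Spec}(A)$ is normal affine, the total space of $E$ is again normal affine, and $\mathrm{p}$ is smooth and surjective, so that for every closed $Z\subset X$ of codimension at least $2$ the preimage $\mathrm{p}^{-1}(Z)$ has codimension at least $2$ in $E$, and moreover $E_{\mathrm{reg}}=\mathrm{p}^{-1}(X_{\mathrm{reg}})$.

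For the vertical directions I would invoke \S\ref{par:Ga-Base-lift}: every global section $s\in H^{0}(X,L_{i})$ yields a $\mathbb{G}_{a,X}$-action $\sigma_{s}$ on $L_{i}$ by fiberwise translation, which I extend to a $\mathbb{G}_{a}$-action on $E$ acting trivially on the remaining summands. Its velocity field is a locally nilpotent derivation of $\Gamma(E,\mathcal{O}_{E})$ whose value at a point $e\in E$ over $y\in X$ is $s(y)\in(L_{i})_{y}$, viewed inside the vertical tangent space $\ker d\mathrm{p}_{e}\simeq E_{y}$. Because $X$ is affine, each $L_{i}$ is generated by its global sections, so as $s$ and $i$ vary these vertical vectors span $\bigoplus_{i}(L_{i})_{y}=E_{y}$, i.e. the whole kernel of $d\mathrm{p}_{e}:T_{e}E\rightarrow T_{y}X$.

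For the horizontal directions the key input, and the only genuine point of the argument, is that every $\mathbb{G}_{a}$-action on $X$ lifts to a $\mathbb{G}_{a}$-action on each $L_{i}$ making $\mathrm{p}_{i}$ equivariant: this is the existence of $\mathbb{G}_{a}$-linearizations, which holds because $\mathbb{G}_{a}$ is unipotent with $\mathrm{Pic}(\mathbb{G}_{a})=0$ and no nontrivial characters (compare \cite[Lemma 9]{BML08} and \cite[Corollary 4.5]{AFKKZ12}). Linearizing all summands simultaneously lifts a locally nilpotent derivation $\partial_{j}$ of $A$ to a locally nilpotent derivation $\tilde{\partial}_{j}$ of $\Gamma(E,\mathcal{O}_{E})$ with $d\mathrm{p}_{e}(\tilde{\partial}_{j}(e))=\partial_{j}(y)$; I expect this to be where all the work sits, while the remaining steps are formal. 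The point is that linearization produces an honest $\mathbb{G}_{a}$-action, so local nilpotency of the lift is automatic rather than something to be checked by hand.

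Finally I would combine the two families at a point $y\in X_{\mathrm{flex}}$, where by definition $T_{y}X$ is spanned by tangent vectors $\partial_{1}(y),\dots,\partial_{r}(y)$ to $\mathbb{G}_{a}$-orbits. For $e\in\mathrm{p}^{-1}(y)$ the lifted vectors $\tilde{\partial}_{j}(e)$ surject onto $T_{y}X$ under $d\mathrm{p}_{e}$, while the vertical translation vectors span $\ker d\mathrm{p}_{e}$; by the exact sequence $0\rightarrow E_{y}\rightarrow T_{e}E\rightarrow T_{y}X\rightarrow 0$ coming from smoothness of $\mathrm{p}$, the two families together span $T_{e}E$, so $e$ is flexible in $E$. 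Hence $\mathrm{p}^{-1}(X_{\mathrm{flex}})\subseteq E_{\mathrm{flex}}$, and since $X\setminus X_{\mathrm{flex}}$ has codimension at least $2$, so does $E\setminus E_{\mathrm{flex}}\subseteq\mathrm{p}^{-1}(X\setminus X_{\mathrm{flex}})$, which proves that $E$ is $1$-flexible. In the flexible case one has $X_{\mathrm{flex}}=X_{\mathrm{reg}}$, and then $E_{\mathrm{reg}}=\mathrm{p}^{-1}(X_{\mathrm{reg}})=\mathrm{p}^{-1}(X_{\mathrm{flex}})\subseteq E_{\mathrm{flex}}\subseteq E_{\mathrm{reg}}$ forces $E_{\mathrm{flex}}=E_{\mathrm{reg}}$, so $E$ is flexible.
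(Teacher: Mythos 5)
Your proof is correct and takes essentially the same route as the paper's: the key input in both is that every line bundle admits a $\mathbb{G}_{a}$-linearization, combined with the fiberwise translation actions attached to global sections, yielding $\mathrm{p}^{-1}(X_{\mathrm{flex}})\subseteq E_{\mathrm{flex}}$. The only organizational differences are that the paper reduces by induction to a single line bundle and first checks flexibility at the zero-section point before spreading it along the fiber via the transitivity of a translation action, whereas you linearize all summands at once and span the vertical tangent space directly at an arbitrary point; both variants are valid.
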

\begin{proof}
Since $E$ is isomorphic to the fiber product $L_{1}\times_{X}L_{2}\cdots\times_{X}L_{r}$
of line bundles $\mathrm{p}_{i}:L_{i}\rightarrow X$, we are reduced
by induction to the case of a line bundle $\mathrm{p}:L\rightarrow Y$
over a $1$-flexible (resp. flexible) affine variety. Recall that
for a connected algebraic group $G$ acting on a normal variety $Y$,
there exists an exact sequence of groups 
\[
0\rightarrow H_{\mathrm{alg}}^{1}(G,\Gamma(Y,\mathcal{O}_{Y}^{*}))\rightarrow\mathrm{Pic}^{G}(Y)\stackrel{\alpha}{\rightarrow}\mathrm{Pic}(Y)\rightarrow\mathrm{Pic}(G)
\]
where $\mathrm{Pic}^{G}(Y)$ denotes the group of $G$-linearized
line bundles on $Y$ and where $H_{\mathrm{alg}}^{1}(G,\Gamma(Y,\mathcal{O}_{Y}^{*}))$
parametrizes isomorphy classes of $G$-linearizations of the trivial
line bundle over $Y$ (see e.g. \cite[Chap. 7]{DolgBook}). In the
case where $G=\mathbb{G}_{a}$, this immediately implies that every
line bundle $\mathrm{p}:L\rightarrow Y$ admits a $\mathbb{G}_{a}$-linearization
(note furthermore that such a linearization is unique up to isomorphism
provided that $\Gamma(Y,\mathcal{O}_{Y}^{*})=\mathbb{C}^{*}$). 

It follows in particular that every $\mathbb{G}_{a}$-action on $Y$
can be lifted to a $\mathbb{G}_{a}$-action on $L$ preserving the
zero section $Y_{0}\subset L$ and for which the structure morphism
$\mathrm{p}:L\rightarrow Y$ is $\mathbb{G}_{a}$-invariant. So the
$1$-flexibility (resp. the flexibility) of $L$ follows from that
of $Y$ thanks to \cite[Corollary 4.5]{AFKKZ12}. But let us provide
a self-contained argument: the above property translates algebraically
to the fact that every locally nilpotent derivation $\partial$ of
$\Gamma(Y,\mathcal{O}_{Y})$ extends to a locally nilpotent derivation
$\tilde{\partial}$ of $\Gamma(L,\mathcal{O}_{L})$ mapping the ideal
$I_{Y_{0}}$ of $Y_{0}$ into itself and such that the induced derivation
on $\Gamma(Y_{0},\mathcal{O}_{Y_{0}})=\Gamma(L,\mathcal{O}_{L})/I_{Y_{0}}$
coincides with $\partial$ via the isomorphism $\Gamma(Y,\mathcal{O}_{Y})\stackrel{\sim}{\rightarrow}\Gamma(Y_{0},\mathcal{O}_{Y_{0}})$
induced by the restriction of $\mathrm{p}$. Since $Y$ is affine,
given any point $\ell\in L$, we can find a global section $s\in H^{0}(Y,L)$
which does not vanish at $y=\mathrm{p}(\ell)$. Now if $y$ is a flexible
point of $Y$, say for which $\mathcal{D}er_{\mathbb{C}}(\mathcal{O}_{Y})_{y}$
is generated by the images of locally nilpotent derivations $\partial_{1},\ldots,\partial_{r}$
of $\Gamma(Y,\mathcal{O}_{Y})$ then $\ell_{0}=\mathrm{p}^{-1}(y)\cap Y_{0}$
is a flexible point of $L$ at which $\mathcal{D}er_{\mathbb{C}}(\mathcal{O}_{L})_{\ell_{0}}$
is generated by the lifts $\tilde{\partial}_{1},\ldots,\tilde{\partial}_{r}$
of $\partial_{1},\ldots,\partial_{r}$ together with the locally nilpotent
derivation $\partial_{s}$ of $\Gamma(L,\mathcal{O}_{L})$ corresponding
to the $\mathbb{G}_{a,Y}$-action $\sigma_{s}:\mathbb{G}_{a,Y}\times_{Y}L\rightarrow L$
associated with $s$ (see \S \ref{par:Ga-Base-lift} above). Furthermore,
since $s$ does vanish at $y$, the $\mathbb{G}_{a}$-action induced
by $\sigma_{s}$ on $\mathrm{p}^{-1}(y)$ is transitive, and so $\mathrm{p}^{-1}(y)$
consists of flexible points of $L$. This shows that $\mathrm{p}^{-1}(Y_{\mathrm{flex}})\subset L_{\mathrm{flex}}$
and completes the proof. 
\end{proof}

\section{Construction of rigid and $1$-flexible $\mathbb{A}^{1}$-fibered
surfaces over over $\mathbb{P}^{1}$ }

In this section, we first consider affine surfaces $S_{R}$ which
arise as complements of well-chosen hyperplane sections of a smooth
cubic surface in $\mathbb{P}^{3}$. We check that they are rigid by
computing their automorphism groups and we exhibit certain $\mathbb{A}^{1}$-fibrations
$\pi_{R}:S_{R}\rightarrow\mathbb{P}^{1}$ on them. We then construct
auxiliary $1$-flexible $\mathbb{A}^{1}$-fibered surfaces $\pi_{F}:S_{F}\rightarrow\mathbb{P}^{1}$
which will be used later on in section three for the proof of Theorem
\ref{thm:Main}.

\subsection{A family of rigid affine cubic surfaces }

Most of the material of this sub-section is borrowed from \cite{DubKish12}
to which we refer the reader for the details. 

\begin{parn} Given a smooth cubic surface $V\subset\mathbb{P}^{3}$
and a line $L$ on it, the restriction to $V$ of the linear pencil
$\mathcal{H}_{L}=\left|\mathcal{O}_{\mathbb{P}^{3}}(1)\otimes\mathcal{I}_{L}\right|$
on $\mathbb{P}^{3}$ generated by planes containing $L$ can be decomposed
as $\mathcal{H}_{L}\mid_{V}=\mathcal{L}+L$ where $\mathcal{L}$ is
a base point free pencil defining a conic bundle $\Phi_{\mathcal{L}}:V\rightarrow\mathbb{P}^{1}$
with five degenerate fibers each consisting of the union of two lines.
The restriction $\Phi_{\mathcal{L}}\mid_{L}:L\rightarrow\mathbb{P}^{1}$
is a double cover and for every branch point $x\in\mathbb{P}^{1}$
of $\Phi_{\mathcal{L}}\mid_{L}$, the intersection of $V$ with the
corresponding hyperplane $H_{x}\in\mathcal{H}$ consists either of
a smooth conic tangent to $L$ or two distinct lines intersecting
$L$ in a same point, which is then an Eckardt point of $V$. The
second case does not occur if $V$ is chosen general. We fix from
now on a cubic surface $V$, a line $L$ on it and a hyperplane section
$D=H\cap V$ for which $D=L+C$ where $C$ is a smooth conic tangent
to $L$ at a point $p\in L$.

\end{parn}

\begin{parn} \label{par:Sr-Fib} Given a pair $\left(V,D\right)$
where $D=L+C$ as above, the surface $S_{R}=V\setminus D$ is affine
as $D$ is a hyperplane section of $V$. It comes equipped with an
$\mathbb{A}^{1}$-fibration $\pi_{R}:S_{R}\rightarrow\mathbb{P}^{1}$
which is obtained as follows: we let $\mu:V\rightarrow\mathbb{P}^{2}$
be the birational morphism obtained by contracting a $6$-tuple of
disjoint lines $L,F_{1},\ldots,F_{5}\subset V$ with the property
that each $F_{i}$, $i=1,\ldots,5$, intersects $C$ transversally.
Since $L$ is tangent to $C$, the image $\mu_{*}(C)$ of $C$ in
$\mathbb{P}^{2}$ is a cuspidal cubic. The rational pencil on $\mathbb{P}^{2}$
generated by $\mu_{*}(C)$ and three times its tangent $T$ at its
unique singular point $\mu(p)$ lifts to a rational pencil $\overline{q}:V\dashrightarrow\mathbb{P}^{1}$
having the divisors $C+\sum_{i=1}^{5}F_{i}$ and $3T+L$ as singular
members. Letting $\tau:\hat{V}\rightarrow V$ be a minimal resolution
of $\overline{q}$, the induced morphism $\overline{q}\circ\tau:\hat{V}\rightarrow\mathbb{P}^{1}$
is a $\mathbb{P}^{1}$-fibration whose restriction to $S_{R}=V\setminus D\simeq\hat{V}\setminus\tau^{-1}D$
is an $\mathbb{A}^{1}$-fibration $\pi_{R}:S_{R}\rightarrow\mathbb{P}^{1}$
with two degenerate fibers: one is irreducible of multiplicity three
consisting of the intersection of the proper transform of $T$ with
$S_{R}$ and the other is reduced, consisting of the disjoint union
of the curves $F_{i}\cap S_{R}\simeq\mathbb{A}^{1}$, $i=1,\ldots,5$
(see Figure \ref{fig:FibSR}). 

\begin{figure}[ht]
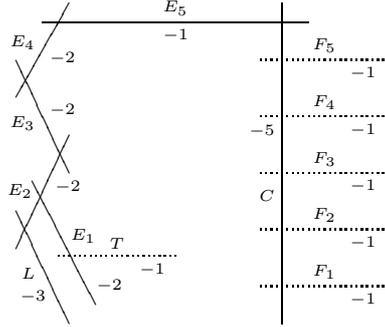
 
$$\xygraph{!{<0cm,0cm>;<0.7cm,0cm>:<0cm,0.5cm>::}
!{(2,-2)}="C1" !{(2,6.5)}="C2"  
!{(1.6,-1)}="F11" !{(1.6,0.5)}="F21" !{(1.6,2)}="F31" !{(1.6,3.5)}="F41" !{(1.6,5)}="F51"
!{(4,-1)}="F12" !{(4,0.5)}="F22" !{(4,2)}="F32" !{(4,3.5)}="F42" !{(4,5)}="F52"
!{(2.5,6)}="S1"!{(-2.5,6)}="S2"
!{(-2,6.5)}="E51" !{(-3,4)}="E52"
!{(-3,5)}="E41" !{(-2,2)}="E42"
!{(-2,3)}="E31" !{(-3,0)}="E32"
!{(-2.7,1.8)}="E21" !{(-1.5,-1.5)}="E22"
!{(-3,1)}="E11" !{(-2,-2)}="E12"
!{(-2.2,-0.2)}="T1" !{(0,-0.2)}="T2"
"F11"-@{.}^{F_{1}}_>(0.8){-1}"F12"
"F21"-@{.}^{F_{2}}_>(0.8){-1}"F22"
"F31"-@{.}^{F_{3}}_>(0.8){-1}"F32"
"F41"-@{.}^{F_{4}}_>(0.8){-1}"F42"
"F51"-@{.}^{F_{5}}_>(0.8){-1}"F52"
"C1"-^>(0.4){C}^>(0.6){-5}"C2"
"S1"-_{E_5}^{-1}"S2"
"E51"-_{E_4}^{-2}"E52"
"E41"-_{E_3}^{-2}"E42"
"E31"-_>(0.55){E_2}^(0.4){-2}"E32"
"E21"-^{E_1}^>(0.9){-2}"E22"
"E11"-_{L}_>(0.7){-3}"E12"
"T1"-@{.}^{T}_>(0.8){-1}"T2"
}
$$
\caption{The total transform of $\mu_*(C)\cup T \subset \mathbb{P}^2$  in the minimal resolution $\tau:\hat{V} \rightarrow V$ of $\overline{q}$. The plain curves correspond to the irreducible components of $\tau^{-1}(D)$ and the exceptional divisors $E_i$ of $\tau$ are numbered according to the order they are extracted.} 
\label{fig:FibSR} 
\end{figure} 

\end{parn}
\begin{rem}
\label{Rk:Alt-SrFib} Choosing an alternative $6$-tuple of disjoint
lines $F_{0,1}F_{0,2},F_{\infty,1},\ldots,F_{\infty,4}$ such that
$F_{\infty,1},\ldots,F_{\infty,4}$ intersect $C$ transversally while
$F_{0,1}$ and $F_{0,2}$ intersects $L$ but not $C$, we obtain
another contraction morphism $\tilde{\mu}:V\rightarrow\mathbb{P}^{2}$
for which the proper transforms of $C$ and $L$ are respectively
a conic and its tangent line at the point $\tilde{\mu}(p)$. One checks
that the lift to $V$ of the rational pencil on $\mathbb{P}^{2}$
generated by $\tilde{\mu}_{*}(C)$ and $2\mu_{*}(L)$ restricts on
$S_{R}$ to an $\mathbb{A}^{1}$-fibration $\pi'_{R}:S_{R}\rightarrow\mathbb{P}^{1}$
with two reducible degenerate fibers: one consisting of the disjoint
union of the curves $F_{0,i}\cap S\simeq\mathbb{A}^{1}$, $i=1,2$,
both occuring with multiplicity two and the other one consisting of
the disjoint union of the reduced curves $F_{\infty,i}\cap S_{R}\simeq\mathbb{A}^{1}$,
$i=1,\ldots,4$. The description of the degenerate fibers shows that
this second $\mathbb{A}^{1}$-fibration is not isomorphic to the one
$\pi_{R}:S_{R}\rightarrow\mathbb{P}^{1}$, so that $S_{R}$ carries
at least two distinct types of $\mathbb{A}^{1}$-fibrations over $\mathbb{P}^{1}$. 
\end{rem}
\begin{parn} To determine the automorphism group of $S_{R}=V\setminus D$
we first notice that the subgroup $\mathrm{Aut}(V,D)$ of $\mathrm{Aut}(V)$
consisting of automorphisms of $V$ which leave $D$ globally invariant
can be identified in a natural way with a subgroup of $\mathrm{Aut}(S_{R})$.
The latter is always finite, and even trivial if the cubic surface
$V$ is chosen general. On the other hand, $S_{R}$ admits at least
another natural automorphism which is obtained as follows: the projection
$\mathbb{P}^{3}\dashrightarrow\mathbb{P}^{2}$ from the point $p=L\cap C$
induces a rational map $Q\dashrightarrow\mathbb{P}^{2}$ with $p$
as a unique proper base point and whose lift to the blow-up $\alpha:W\rightarrow V$
of $V$ at $p$ coincides with the morphism $\theta:W\rightarrow\mathbb{P}^{2}$
defined by the anticanonical linear system $|-K_{W}|$. The latter
factors into a birational morphism $W\rightarrow Y$ contracting the
proper transform of $L$ followed by a Galois double cover $Y\rightarrow\mathbb{P}^{2}$
ramified over an irreducible quartic curve $\Delta$ with a unique
double point located at the image of $L$. The non trivial involution
of the double cover $Y\rightarrow\mathbb{P}^{2}$ induces an involution
$G_{W}:W\rightarrow W$ fixing $L$ and exchanging the proper transform
of $C$ with the exceptional divisor $E$ of $\alpha$. The former
descends to a birational involution $G_{V,p}:V\dashrightarrow V$
which restricts further to a biregular involution $j_{G_{V,p}}$ of
$S_{R}=V\setminus D$.

\end{parn}

\noindent The following description of the automorphism group of
$S_{R}$ shows in particular that these surfaces are rigid: 
\begin{lem}
For a surface $S_{R}=V\setminus D$ as above, there exists a split
exact sequence 
\[
0\rightarrow\mathrm{Aut}(V,D)\rightarrow\mathrm{Aut}(S_{R})\rightarrow\mathbb{Z}_{2}\cdot j_{G_{V,p}}\rightarrow0.
\]
\end{lem}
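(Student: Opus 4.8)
The plan is to identify $\mathrm{Aut}(S_{R})$ with the automorphism group of a canonical smooth completion of $S_{R}$ and to read off the exact sequence from the combinatorics of its boundary. First, since $S_{R}$ is dense in $V$, restriction yields an injective homomorphism $\mathrm{Aut}(V,D)\hookrightarrow\mathrm{Aut}(S_{R})$, and the birational extension to $V$ of any automorphism of $S_{R}$ is unique. As the extension of $j_{G_{V,p}}$ is the strictly birational map $G_{V,p}$, which contracts $C$, the involution $j_{G_{V,p}}$ does not lie in $\mathrm{Aut}(V,D)$. Thus $\langle j_{G_{V,p}}\rangle\cong\mathbb{Z}_{2}$ will automatically split whatever quotient is produced, and the whole statement reduces to showing that $\mathrm{Aut}(V,D)$ is normal of index two in $\mathrm{Aut}(S_{R})$ with $j_{G_{V,p}}$ representing the nontrivial class.

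I would next introduce the minimal SNC completion $(\hat{V},B)$ of $S_{R}$. Since $L$ and $C$ are tangent at $p$, so that the intersection $L\cdot C=2$ is concentrated at $p$, the divisor $D=L+C$ is not SNC, and two successive blow-ups over $p$ resolve it: blowing up $p$ separates $L$ and $C$ only to first order and creates a triple point $q$ where the proper transforms of $L$ and $C$ meet the exceptional curve $E$ of $\alpha\colon W\to V$, and blowing up $q$ produces a central curve $F$. A direct self-intersection computation then gives the weighted dual graph of $B$: a central $(-1)$-curve $F$ meeting three disjoint arms, namely the proper transform of $L$ of self-intersection $-3$, the proper transform of $C$ of self-intersection $-2$, and the proper transform of $E$ of self-intersection $-2$. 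Contracting $B$ to a completion whose boundary is a line together with a tangent conic can be performed in exactly two ways, according to whether one contracts the arm coming from $C$ or the arm coming from $E$ after contracting $F$; both recover $(V,D)$ and are interchanged by the biregular involution $G_{W}$ of $W$, which fixes $q$ and hence lifts to the involution of $\hat{V}$ transposing the two $(-2)$-arms while fixing $F$ and the $(-3)$-arm. This lifted involution restricts on $S_{R}$ to $j_{G_{V,p}}$.

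The main step, and the principal obstacle, is to prove that every automorphism of $S_{R}$ extends to a biregular automorphism of $(\hat{V},B)$, that is, that $\mathrm{Aut}(S_{R})=\mathrm{Aut}(\hat{V},B)$. The inclusion $\mathrm{Aut}(\hat{V},B)\subseteq\mathrm{Aut}(S_{R})$ is clear by restriction. For the converse, any $\phi\in\mathrm{Aut}(S_{R})$ extends to a birational self-map $\hat{\phi}\colon\hat{V}\dashrightarrow\hat{V}$ that is an isomorphism on $S_{R}$, hence transforms $B$ birationally into itself, and it suffices to exclude genuine blow-ups and blow-downs supported on the boundary. Here I would exploit that $B$ is SNC-minimal and is not a chain: its only $(-1)$-curve is the central component $F$, which is a branch vertex of valence three and therefore cannot be contracted within SNC completions, so no inner elementary transformation of the boundary is admissible. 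Consequently $(\hat{V},B)$ is the unique minimal SNC completion of $S_{R}$ and $\hat{\phi}$ must be biregular. Equivalently, one may invoke that $D\in|-K_{V}|$, so that $(V,D)$ is a log Calabi--Yau pair whose minimal log resolution is canonical and hence preserved by every automorphism of $S_{R}$. I expect this rigidity of the weighted boundary tree to be the technical heart of the argument.

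Finally I would extract the exact sequence from $\mathrm{Aut}(\hat{V},B)$. Every element of $\mathrm{Aut}(\hat{V},B)$ permutes the components of $B$ preserving self-intersections and incidences; since $F$ is the unique $(-1)$-curve and the unique branch point, and the proper transform of $L$ is the unique $(-3)$-curve, both are fixed, and the only possible nontrivial permutation is the transposition of the two $(-2)$-arms. This defines a homomorphism $\mathrm{Aut}(\hat{V},B)\to\mathbb{Z}_{2}$ recording that transposition. Its kernel consists of the automorphisms fixing every boundary component; contracting $F$ and then $E$ shows that these descend precisely to the elements of $\mathrm{Aut}(V,D)$, so the kernel is $\mathrm{Aut}(V,D)$. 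The class of $j_{G_{V,p}}$ is the transposition, whence the homomorphism is surjective; being the kernel, $\mathrm{Aut}(V,D)$ is normal, and since $j_{G_{V,p}}$ is an involution the subgroup it generates maps isomorphically onto $\mathbb{Z}_{2}$, providing the splitting. This yields the asserted split exact sequence.
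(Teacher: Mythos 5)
Your proposal is correct and follows essentially the same route as the paper: your minimal SNC completion $(\hat{V},B)$ is exactly the paper's $(\tilde{V},\tilde{D})$ obtained by two blow-ups over $p$, and the key step --- that every automorphism of $S_{R}$ extends biregularly because the unique $(-1)$-curve of the boundary is a valence-three branch component whose contraction would destroy the SNC property --- is precisely the paper's minimal-resolution argument. Packaging the conclusion as a homomorphism $\mathrm{Aut}(\hat{V},B)\rightarrow\mathbb{Z}_{2}$ recording the transposition of the two $(-2)$-arms, rather than the paper's descent through $W$ and the alternative ``$f$ or $G_{V,p}\circ f$ is biregular'', is only a cosmetic difference.
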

\begin{proof}
We interpret every automorphism of $S_{R}$ as a birational self-map
$f:V\dashrightarrow V$ of $V$ restricting to an isomorphism from
$S_{R}=V\setminus D$ to itself. Since $f\in\mathrm{Aut}(V,D)$ in
case it is biregular, it is enough to show that either $f$ or $G_{V,p}\circ f$
is biregular. To establish this alternative, it suffices to check
that the lift $f_{W}=\alpha^{-1}f\alpha:W\dashrightarrow W$ of $f$
to $W$ is a biregular morphism, hence an automorphism of the pair
$(W,\alpha^{-1}(D)_{\mathrm{red}})$. Indeed if so, then $f_{W}$
preserves the union of $E$ and the proper transform of $D$ as these
are the only $\left(-1\right)$-curves contained in the support of
$\alpha^{-1}(D)_{\mathrm{red}}$. Since by construction $G_{W}$ exchanges
$E$ and the proper transform of $D$, it follows that either $f_{W}$
or $G_{W}\circ f_{W}$ leaves $E$, the proper transform of $D$ and
the proper transform of $L$ invariant. This implies in turn that
either $f=\alpha f_{W}\alpha^{-1}$ or $\alpha G_{W}\circ f_{W}\alpha^{-1}=(\alpha G_{W}\alpha^{-1})\circ(\alpha f_{W}\alpha^{-1})=G_{V,p}\circ f$
is a biregular automorphism of $V$. 

To show that $f_{W}$ is a biregular automorphism of $W$, we consider
the lift $\tilde{f}=\sigma^{-1}\circ f\circ\sigma:\tilde{V}\dashrightarrow\tilde{V}$
of $f$ to the variety $\tilde{\alpha}:\tilde{V}\rightarrow W$ obtained
from $W$ by blowing-up further the intersection point of $E$ and
of the proper transform of $C$, say with exceptional divisor $\tilde{E}$.
We identify $S_{R}$ with the complement in $\tilde{V}$ of the SNC
divisor $\tilde{D}=L\cup C\cup E\cup\tilde{E}$. Now suppose by contradiction
that $\tilde{f}$ is strictly birational and consider its minimal
resolution $\tilde{V}\stackrel{\beta}{\leftarrow}X\stackrel{\beta'}{\rightarrow}\tilde{V}'$.
Recall that the minimality of the resolution implies in particular
that there is no $\left(-1\right)$-curve in $X$ which is exceptional
for $\beta$ and $\beta'$ simultaneously. Furthermore, since $\tilde{V}$
is smooth and $\tilde{D}$ is an SNC divisor, $\beta'$ decomposes
into a finite sequence of blow-downs of successive $\left(-1\right)$-curves
supported on the boundary $B=\beta^{-1}(\tilde{D})_{\mathrm{red}}=(\beta')^{-1}(\tilde{D})_{\mathrm{red}}$
with the property that at each step, the proper transform of $B$
is again an SNC divisor. The structure of $\tilde{D}$ implies that
the only possible $\left(-1\right)$-curve in $B$ which is not exceptional
for $\beta$ is the proper transform of $\tilde{E}$, but after its
contraction, the proper transform of $B$ would no longer be an SNC
divisor, a contradiction. So $\tilde{f}:\tilde{V}\rightarrow\tilde{V}$
is a morphism and the same argument shows that it does not contract
any curve in the boundary $\tilde{D}$. Thus $\tilde{f}$ is a biregular
automorphism of $\tilde{V}$, in fact, an element of $\mathrm{Aut}(\tilde{V},\tilde{D})$.
Since $\tilde{E}$ is the unique $\left(-1\right)$-curve contained
in the support of $\tilde{D}$ it must be invariant by $\tilde{f}$
which implies in turn that $f_{W}=\tilde{\alpha}\tilde{f}\tilde{\alpha}^{-1}$
is a biregular automorphism of the pair $(W,\alpha^{-1}(D)_{\mathrm{red}})$,
as desired. 
\end{proof}

\subsection{Flexible mates }

In this subsection, we construct $1$-flexible affine surfaces $S_{F}$
admitting $\mathbb{A}^{1}$-fibrations $\pi_{F}:S_{F}\rightarrow\mathbb{P}^{1}$
whose degenerate fibers resemble the ones of the fibrations $\pi_{R}:S_{R}\rightarrow\mathbb{P}^{1}$
described in \S \ref{par:Sr-Fib} above. A more precise interpretation
of this resemblance, going beyond the bare fact that the number of
their irreducible components and their respective multiplicities are
the same, will be given in the next section. 

\begin{parn} \label{par:Sf_Fib} For the construction, we start with
a Hirzebruch surface $\pi_{n}:\mathbb{F}_{n}=\mathbb{P}(\mathcal{O}_{\mathbb{P}^{1}}\oplus\mathcal{O}_{\mathbb{P}^{1}}(-n))\rightarrow\mathbb{P}^{1}$,
$n\geq0$, in which we fix  an ample section $C\simeq\mathbb{P}^{1}$
of $\pi_{n}$ and two distinct fibers, say $F_{0}=\pi_{n}^{-1}(p_{0})$
and $F_{\infty}=\pi_{n}^{-1}(p_{\infty})$, where $p_{0},p_{\infty}\in\mathbb{P}^{1}$
. We let $\sigma:X\rightarrow\mathbb{F}_{n}$ be the birational map
obtained by the following sequence of blow-ups:

- Step 1 consists of the blow-up of five distinct points $p_{\infty,1},\ldots,p_{\infty,5}$
on $F_{\infty}\setminus C$ with respective exceptional divisors $G_{\infty,1},\ldots,G_{\infty,5}$. 

- Step 2 consists of the blow-up of a point $p_{0,1}\in F_{0}\setminus C$
with exceptional divisor $E_{1}$, followed by the blow-up of the
intersection point $p_{0,2}$ of the proper transform of $F_{0}$
with $E_{1}$, with exceptional divisor $E_{2}$, then followed by
the blow-up of the intersection point $p_{0,3}$ of the proper transform
of $F_{0}$ with $E_{2}$, with exceptional divisor $E_{3}$. Finally,
we blow-up a point $p_{0,4}\in E_{3}$ distinct from the intersection
points of $E_{3}$ with the proper transforms of $F_{0}$ and $E_{2}$
respectively. We denote the last exceptional divisor produced by $G_{0,1}$. 

The structure morphism $\pi_{n}:\mathbb{F}_{n}\rightarrow\mathbb{P}^{1}$
lifts to a $\mathbb{P}^{1}$-fibration $\overline{p}=\pi_{n}\circ\sigma:X\rightarrow\mathbb{P}^{1}$
with two degenerate fibers $\overline{p}^{-1}(p_{0})=F_{0}+E_{1}+2E_{2}+3E_{3}+3G_{0,1}$
and $\overline{p}^{-1}(p_{\infty})=F_{\infty}+\sum_{i=1}^{5}G_{\infty,i}$.
The inverse image by $\sigma$ of the divisor $F_{0}\cup C\cup F_{\infty}$
is pictured in Figure \ref{fig:FibSF}. Letting $S_{F}$ be the open
complement in $X$ of the divisor $B=F_{\infty}\cup C\cup F_{0}\cup E_{3}\cup E_{2}\cup E_{1}$,
the restriction of $\overline{p}$ to $S_{F}$ is an $\mathbb{A}^{1}$-fibration
$\pi_{F}:S_{F}\rightarrow\mathbb{P}^{1}$ with two degenerate fibers:
the one $\pi_{F}^{-1}(p_{0})$ is irreducible of multiplicity three
consisting of the intersection of $G_{0,1}$ with $S_{F}$ and the
other one $\pi_{F}^{-1}(p_{\infty})$ is reduced, consisting of the
disjoint union of the curves $G_{\infty,i}\cap S_{F}\simeq\mathbb{A}^{1}$,
$i=1,\ldots,5$. 

\begin{figure}[ht]
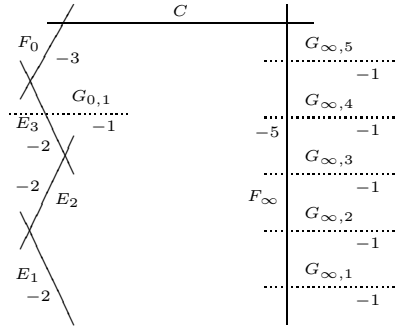
 
$$\xygraph{!{<0cm,0cm>;<0.7cm,0cm>:<0cm,0.5cm>::}
!{(2,-2)}="C1" !{(2,6.5)}="C2"  
!{(1.6,-1)}="F11" !{(1.6,0.5)}="F21" !{(1.6,2)}="F31" !{(1.6,3.5)}="F41" !{(1.6,5)}="F51"
!{(4,-1)}="F12" !{(4,0.5)}="F22" !{(4,2)}="F32" !{(4,3.5)}="F42" !{(4,5)}="F52"
!{(2.5,6)}="S1"!{(-2.5,6)}="S2"
!{(-2,6.5)}="F01" !{(-3,4)}="F02"
!{(-3,5)}="E31" !{(-2,2)}="E32"
!{(-2,3)}="E21" !{(-3,0)}="E22"
!{(-3,1)}="E11" !{(-2,-2)}="E12"
!{(-3.2,3.6)}="G1" !{(-1,3.6)}="G2"
"F11"-@{.}^{G_{\infty,1}}_>(0.8){-1}"F12"
"F21"-@{.}^{G_{\infty,2}}_>(0.8){-1}"F22"
"F31"-@{.}^{G_{\infty,3}}_>(0.8){-1}"F32"
"F41"-@{.}^{G_{\infty,4}}_>(0.8){-1}"F42"
"F51"-@{.}^{G_{\infty,5}}_>(0.8){-1}"F52"
"C1"-^>(0.4){F_{\infty}}^>(0.6){-5}"C2"
"S1"-_{C}"S2"
"F01"-_{F_0}^{-3}"F02"
"E31"-_{E_3}_>(0.7){-2}"E32"
"E21"-^{E_2}_{-2}"E22"
"E11"-_{E_1}_>(0.7){-2}"E12"
"G1"-@{.}^>(0.7){G_{0,1}}_>(0.8){-1}"G2"
}
$$
\caption{The total transform of $F_0 \cup C \cup F_{\infty} \subset \mathbb{F}_n$ in $X$. The plain curves correspond to irreducible components of the boundary divisor $B$.} 
\label{fig:FibSF} 
\end{figure} 

\end{parn}
\begin{lem}
\label{lem:Flex_Model} A surface $S_{F}=X\setminus B$ as above is
affine and $1$-flexible.\end{lem}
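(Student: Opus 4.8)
The plan is to apply the characterization of $1$-flexible surfaces provided by Theorem~\ref{thm:Giz-Flex}, after first establishing affineness by hand. I would begin by reading off the dual graph of the boundary $B=F_{\infty}\cup C\cup F_{0}\cup E_{3}\cup E_{2}\cup E_{1}$ from the construction in \S\ref{par:Sf_Fib}. Tracking self-intersections and the infinitely near centres through $\sigma:X\rightarrow\mathbb{F}_{n}$ shows that $B$ is a \emph{chain} (zigzag) of smooth rational curves
\[
F_{\infty}-C-F_{0}-E_{3}-E_{2}-E_{1}
\]
with respective self-intersections $-5,\,C^{2},\,-3,\,-2,\,-2,\,-2$, the section $C$ being untouched by the blow-ups since none of the centres lie on it. In particular $C=\sigma^{*}C$ is big and nef with $C^{2}>0$, and its null locus consists exactly of the $\sigma$-exceptional curves $E_{1},E_{2},E_{3},G_{0,1},G_{\infty,1},\dots,G_{\infty,5}$. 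Since $X$ is smooth, $B$ is an SNC divisor supported on the regular locus of $X$.

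Next I would prove that $S_{F}$ is affine. The key observation is that $S_{F}=X\setminus B$ contains no complete curve: a complete irreducible curve $\Gamma\subset S_{F}$ would be disjoint from $C\subset B$, hence would satisfy $C\cdot\Gamma=0$ and thus lie in the null locus of the big and nef divisor $C$; but every $\sigma$-exceptional curve meets $B$ (indeed $E_{i}\subset B$, while $G_{0,1}$ meets $E_{3}$ and each $G_{\infty,j}$ meets $F_{\infty}$), a contradiction. Since $B$ is connected and supports a divisor of positive self-intersection (it contains $C$), Goodman's affineness criterion for complements of curves in projective surfaces then yields that $S_{F}$ is affine. Concretely this amounts to exhibiting an effective ample $\mathbb{Q}$-divisor supported on $B$ via Nakai--Moishezon: a divisor $D=N\cdot C+\sum_{i}a_{i}B_{i}$ with $N\gg0$ is automatically positive against every curve outside the null locus of $C$, and the finitely many null-locus curves are handled by choosing the positive coefficients $a_{i}$ on the negative part $\{F_{\infty},F_{0},E_{3},E_{2},E_{1}\}$ of the chain. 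I expect this ample-divisor construction — more precisely, producing positive coefficients against the negative-definite subchain $F_{0}-E_{3}-E_{2}-E_{1}$ — to be the main technical point.

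It then remains to verify the hypotheses of Theorem~\ref{thm:Giz-Flex}(c). The zigzag structure of $B$ in the regular locus has been recorded above. For triviality of the units I would use the exact sequence identifying $\Gamma(S_{F},\mathcal{O}_{S_{F}}^{*})/\mathbb{C}^{*}$ with the kernel of the map $\bigoplus_{i}\mathbb{Z}\cdot B_{i}\rightarrow\mathrm{Pic}(X)$ sending each boundary component to its class; hence $\Gamma(S_{F},\mathcal{O}_{S_{F}}^{*})=\mathbb{C}^{*}$ as soon as the six boundary classes are linearly independent, which follows from non-degeneracy of the intersection matrix of the chain. A direct continuant computation gives its determinant as $-45\,C^{2}-29$, which is nonzero for every ample $C$, yielding the independence. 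Having checked that $S_{F}$ is affine, that $\Gamma(S_{F},\mathcal{O}_{S_{F}}^{*})=\mathbb{C}^{*}$, and that it admits a completion by a zigzag of smooth rational curves supported on the regular locus, the implication (c)$\Rightarrow$(a) of Theorem~\ref{thm:Giz-Flex} shows that $S_{F}$ is $1$-flexible, completing the proof.
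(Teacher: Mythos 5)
Your proof is correct and follows essentially the same route as the paper's: first rule out complete curves in $S_{F}$, then produce an effective ample divisor supported on $B$ via Nakai--Moishezon (large coefficient on $C$, suitably ordered positive coefficients on the negative subchain $F_{0}-E_{3}-E_{2}-E_{1}$ --- the paper takes $a_{0}\gg a_{3}\gg a_{2}\gg a_{1}$ and $a\gg\max(a_{0},a_{\infty})$), and conclude by Theorem \ref{thm:Giz-Flex}(c). Your no-complete-curve argument via the null locus of $\sigma^{*}C$ on $X$ is just a rephrasing of the paper's (which pushes the hypothetical curve down to $\mathbb{F}_{n}$ and invokes ampleness of $C$ there). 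The one genuine divergence is the triviality of units: the paper notes that an invertible function is constant on the general fibers of the $\mathbb{A}^{1}$-fibration $\pi_{F}$, hence descends to $\mathbb{P}^{1}$ and is constant, whereas you deduce it from the linear independence of the six boundary classes in $\mathrm{Pic}(X)$, which you get from the non-degeneracy of the intersection matrix of the chain (your continuant value $-45\,C^{2}-29$ is correct). Both arguments are sound; the paper's is softer and uses structure already in place, while yours has the mild added value of recording that the boundary components are numerically independent.
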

\begin{proof}
By construction, $B$ is chain of smooth complete rational curves.
So the $1$-flexibility of $S_{F}$ will follows from Theorem \ref{thm:Giz-Flex}
provided that $S_{F}$ is indeed affine and has no non constant invertible
functions. Since $\pi_{F}:S_{F}\rightarrow\mathbb{P}^{1}$ is an $\mathbb{A}^{1}$-fibration,
an invertible function on $S_{F}$ is constant in restriction to every
non degenerate fiber of $\pi_{F}$ and hence has the form $f\circ\pi_{F}$
for a certain global invertible function $f$ on $\mathbb{P}^{1}$.
So such a function is certainly constant. To establish the affineness
of $S_{F}$, we first observe that $S_{F}$ does not contain a complete
curve. Indeed, otherwise since the points blown-up by $\sigma:X\rightarrow\mathbb{F}_{n}$
are contained in $\mathbb{F}_{n}\setminus C$, the image by $\sigma$
of such curve would be a complete curve in $\mathbb{F}_{n}$ which
does not intersect $C$, in contradiction with the ampleness of $C$
in $\mathbb{F}_{n}$. On the other hand, since $C$ has positive self-intersection
in $\mathbb{F}_{n}$, whence in $X$, one checks by direct computation
that for $a_{\infty},a,a_{0},a_{1},a_{2},a_{3}\in\mathbb{Z}_{>0}$
such that $a_{0}\gg a_{3}\gg a_{2}\gg a_{1}$ and $a\gg\max(a_{0},a_{\infty})$,
the effective divisor $\tilde{B}=a_{\infty}F_{\infty}+aC+a_{0}F_{0}+a_{1}E_{1}+a_{2}E_{2}+a_{3}E_{3}$
has positive self-intersection and positive intersection which each
of its irreducible components. It then follows from the Nakai-Moishezon
criterion that $\tilde{B}$ is an ample effective divisor supported
on $B$, and hence that $S_{F}=X\setminus B$ is affine. \end{proof}
\begin{rem}
\label{rk:Alt-SfFib} In the construction of \S \ref{par:Sf_Fib},
one can replace Step 1 and 2 by the following alternative sequence
of blow-ups $\sigma':X'\rightarrow\mathbb{F}_{n}$:

- Step 1' consists of the blow-up of four distinct points $p'_{\infty,1},\ldots,p'_{\infty,4}$
on $F_{\infty}\setminus C$ with respective exceptional divisors $G'_{\infty,1},\ldots,G'_{\infty,4}$. 

- Step 2' consists of the blow-up of a point $p'_{0,1}\in F_{0}\setminus C$
with exceptional divisor $E_{1}'$, followed by the blow-up of the
intersection point $p'_{0,2}$ of the proper transform of $F_{0}$
with $E'_{1}$, with exceptional divisor $E'_{2}$, then followed
by the blow-up of a pair of distinct points $p'_{0,3}$ and $p''_{0,3}$
on $E_{2}'$ distinct from the intersection points of $E_{2}'$ with
the proper transforms of $F_{0}$ and $E_{1}'$, with respective exceptional
divisors $G'_{0,1}$ and $G'_{0,2}$. 

The morphism $\overline{p}'=\pi_{n}\circ\sigma':X'\rightarrow\mathbb{P}^{1}$
is then a $\mathbb{P}^{1}$-fibration with two degenerate fibers ${\overline{p}'}^{-1}(p_{0})=F_{0}+E'_{1}+2E'_{2}+2G'_{0,1}+2G'_{0,2}$
and ${\overline{p}'}^{-1}(p_{\infty})=F_{\infty}+\sum_{i=1}^{4}G'_{\infty,i}$.
The same argument as in the proof of Lemma \ref{lem:Flex_Model} above
shows that the complement in $X'$ of the chain of smooth complete
rational curves $B'=F_{\infty}\cup C\cup F_{0}\cup E_{1}'\cup E_{2}'$
is a $1$-flexible affine surface, on which $\overline{p}'$ restricts
to an $\mathbb{A}^{1}$-fibration $\pi'_{F}:S'_{F}\rightarrow\mathbb{P}^{1}$
with two degenerate fibers consisting respectively of the disjoint
union of $G'_{0,i}\cap S'_{F}\simeq\mathbb{A}^{1}$, $i=1,2$ both
occurring with multiplicity $2$ and of the disjoint union of the
reduced curves $G'_{\infty,i}\cap S'_{F}\simeq\mathbb{A}^{1}$, $i=1,\ldots,4$.
So $\pi'_{F}:S'_{F}\rightarrow\mathbb{P}^{1}$ resembles the alternative
$\mathbb{A}^{1}$-fibration $\pi'_{R}:S_{R}\rightarrow\mathbb{P}^{1}$
described in Remark \ref{Rk:Alt-SrFib} above. 
\end{rem}

\section{rigidity lost}

\noindent The last ingredient needed to derive Theorem \ref{thm:Main}
is the following result:
\begin{prop}
\label{prop:Factor} Let $\pi_{R}:S_{R}\rightarrow\mathbb{P}^{1}$
and $\pi_{F}:S_{F}\rightarrow\mathbb{P}^{1}$ be a pair of $\mathbb{A}^{1}$-fibered
surfaces as constructed in \S \ref{par:Sr-Fib} and \ref{par:Sf_Fib}
above. Then there exists an algebraic space $\delta:\mathfrak{C}\rightarrow\mathbb{P}^{1}$
such that $\pi_{R}$ and $\pi_{F}$ factor respectively through \'etale
locally trivial $\mathbb{A}^{1}$-bundles $\rho_{R}:S_{R}\rightarrow\mathfrak{C}$
and $\rho_{F}:S_{F}\rightarrow\mathfrak{C}$. 
\end{prop}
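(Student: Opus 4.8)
The plan is to realize $\mathfrak{C}$ as the common \emph{fine} quotient of the two $\mathbb{A}^1$-fibrations, taken in the category of algebraic spaces over $\mathbb{P}^1$, and to produce the two factorizations by matching the local structure of $\pi_R$ and $\pi_F$ over the two special points $p_0$ and $p_\infty$. First I would treat the generic locus: over $\mathbb{P}^1\setminus\{p_0,p_\infty\}$ both $\pi_R$ and $\pi_F$ are $\mathbb{A}^1$-fibrations all of whose fibers are reduced and isomorphic to $\mathbb{A}^1$, hence (locally trivial) $\mathbb{A}^1$-bundles over a smooth curve. There the fine quotient is simply $\mathbb{P}^1\setminus\{p_0,p_\infty\}$, the map $\delta$ is an isomorphism, and $\rho_R$, $\rho_F$ are the fibration maps themselves. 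The construction of $\mathfrak{C}$ therefore reduces to prescribing its local model, together with the bundle structure, over étale (or analytic) neighborhoods of $p_0$ and of $p_\infty$, and then gluing these models to the generic part along punctured neighborhoods.

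Near $p_\infty$ the degenerate fibers of both $\pi_R$ and $\pi_F$ consist of five \emph{reduced}, pairwise disjoint copies of $\mathbb{A}^1$ (the curves $F_i\cap S_R$, resp. $G_{\infty,i}\cap S_F$). This is the classical Danielewski situation: I would define $\mathfrak{C}$ over a neighborhood $U_\infty$ of $p_\infty$ to be the \emph{non-separated} scheme obtained by gluing five copies of $U_\infty$ along the punctured neighborhood $U_\infty\setminus\{p_\infty\}$, so that $\delta$ is generically an isomorphism but has five points over $p_\infty$. Each of the five boundary lines of the degenerate fiber then maps to a distinct copy of $p_\infty$, and $\rho$ becomes a Zariski-locally trivial $\mathbb{A}^1$-bundle, trivial on each of the five sheets.

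The genuinely new part, the \emph{variant} of the trick, is the multiplicity-three fiber over $p_0$. Here the degenerate fiber is irreducible, reduced as a curve and isomorphic to $\mathbb{A}^1$, but occurs with multiplicity three (it is $G_{0,1}\cap S_F$, resp. the proper transform of $T$ meeting $S_R$). I would model $\mathfrak{C}$ near $p_0$ as a cyclic quotient: up to isomorphism over a neighborhood $U_0$ of $p_0$, such a local $\mathbb{A}^1$-fibration is the quotient of a trivial bundle $\tilde{U}_0\times\mathbb{A}^1$ by a $\mathbb{Z}/3$-action covering the ramified cover $\tilde{U}_0\to U_0$, $\tilde{t}\mapsto\tilde{t}^3$. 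I would take $\mathfrak{C}$ over $U_0$ to be the corresponding quotient algebraic space, so that $\rho$ is an $\mathbb{A}^1$-bundle which trivializes only after the étale base change killing the $\mathbb{Z}/3$-twist, whence étale, but not Zariski, local triviality. The relevant cyclic weight is read off from the chain of infinitely near points recorded in Figures \ref{fig:FibSR} and \ref{fig:FibSF}, namely the identical sequence of multiplicities $1,2,3,3$ produced by Step 2 of \S\ref{par:Sf_Fib} and its counterpart for $S_R$.

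Finally I would glue the three local models and the corresponding bundle structures, and then prove the key point that the resulting algebraic spaces $\mathfrak{C}_R$ and $\mathfrak{C}_F$ are isomorphic over $\mathbb{P}^1$. Over the generic locus both equal $\mathbb{P}^1\setminus\{p_0,p_\infty\}$; near $p_\infty$ both are the five-fold point multiplication, so any bijection of the five sheets induces an isomorphism; near $p_0$ both are the same $\mathbb{Z}/3$-quotient, the cyclic weights being encoded by the identical resolution chains. Matching these local isomorphisms with the identity on the generic part yields the common base $\delta:\mathfrak{C}\to\mathbb{P}^1$ together with the two factorizations $\rho_R,\rho_F$. \textbf{The main obstacle} I anticipate lies in the multiple-fiber analysis over $p_0$: one must show that the multiplicity-three fiber forces exactly the $\mathbb{Z}/3$-ramified local structure, with no finer surviving invariant, so that the local bases attached to $S_R$ and $S_F$ genuinely coincide. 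This is the step where the combinatorial \emph{resemblance} of the two fibrations, so far recorded only at the level of multiplicities, has to be upgraded to an honest isomorphism of local $\mathbb{A}^1$-bundle data.
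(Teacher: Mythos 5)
Your overall architecture --- localize over $\mathbb{P}^1$, take the identity over the generic locus, the five-origin Danielewski gluing over $p_\infty$, a cyclic-cover construction over $p_0$, and a matching of local pieces at the end --- is exactly that of the paper, and you correctly single out the multiplicity-three fiber as the crux. However, your local model over $p_0$ is wrong, and this is precisely the step that carries the content of the proposition. You assert that over a neighborhood $U_0$ of $p_0$ the fibration is the quotient of a trivial bundle $\tilde{U}_0\times\mathbb{A}^1$ by a $\mu_3$-action covering the ramified triple cover $\tilde{U}_0\rightarrow U_0$. No such presentation exists: the $\mu_3$-action on $\tilde{U}_0$ fixes $\tilde{p}_0$, and every finite-order automorphism of $\mathbb{A}^1$ is affine and has a fixed point, so the lifted action cannot be free along the fiber over $\tilde{p}_0$. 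Consequently $(\tilde{U}_0\times\mathbb{A}^1)/\mu_3$ is either a trivial $\mathbb{A}^1$-bundle over $U_0$ (if the action in the fiber direction is trivial) or has a cyclic quotient singularity; in neither case is it a smooth surface carrying an irreducible fiber of multiplicity three. Relatedly, if your local base over $U_0$ were just $\tilde{U}_0/\mu_3\simeq U_0$, then $\rho$ would coincide with $\pi$ there and could not be an \'etale locally trivial $\mathbb{A}^1$-bundle, since such a bundle has all fibers reduced.

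The missing idea is to resolve the multiple fiber \emph{before} quotienting. The paper pulls back $S_{R,0}$ along the ramified triple cover and passes to the normalization $\tilde{S}_{R,0}$ of the reduced fiber product; there the fiber over $\tilde{p}_0$ becomes reduced with three disjoint components which the Galois group $\mu_3$ permutes cyclically. One then forms the Danielewski quotient $\tilde{\mathfrak{C}}_0$ of $\tilde{S}_{R,0}$ (the curve $\tilde{U}_0$ with three origins), on which the induced $\mu_3$-action is free precisely because it permutes the origins, and takes the geometric quotient $\tilde{\mathfrak{C}}_0/\mu_3$ in the category of algebraic spaces: this is a genuine non-scheme algebraic space with a single point over $p_0$, and $S_{R,0}\rightarrow\tilde{\mathfrak{C}}_0/\mu_3$ is the desired \'etale locally trivial $\mathbb{A}^1$-bundle. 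Your \emph{main obstacle} --- that the local base should depend only on the multiplicity and not on finer invariants --- is then settled by observing that $\tilde{\mathfrak{C}}_0/\mu_3$ is built purely from the combinatorics of the three permuted components, which is forced by smoothness and the multiplicity alone; the paper also proves a uniqueness statement for $S/\mathbb{A}^1$ (closed fibers of $\rho$ correspond bijectively to irreducible components of closed fibers of $\pi$, and the resulting bijection of bases is a morphism \'etale-locally), which is what legitimizes both the gluing of the local models and the identification of $S_R/\mathbb{A}^1$ with $S_F/\mathbb{A}^1$; your sketch leaves this point implicit.
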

\noindent Let us first explain how derive the $1$-flexibility of
the cylinder $S_{F}\times\mathbb{A}^{2}$ from this Proposition. 

\begin{parn} Recall that since the automorphism group of $\mathbb{A}^{1}$
is the affine group $\mathrm{Aff}_{1}=\mathbb{G}_{m}\ltimes\mathbb{G}_{a}$,
every \'etale locally trivial $\mathbb{A}^{1}$-bundle $\rho:S\rightarrow\mathfrak{C}$
is in fact an affine-linear bundle. This means that there exists a
line bundle $\mathrm{p}:L\rightarrow\mathfrak{C}$ such that $\rho:S\rightarrow\mathfrak{C}$
has the structure of an \'etale $L$-torsor, that is, an \'etale
locally trivial principal homogeneous bundle under $L$, considered
as a group space over $\mathfrak{C}$ for the group law induced by
the addition of germs of sections. Isomorphy classes of such principal
homogeneous $L$-bundles are then classified by the cohomology group
$H_{\mathrm{\acute{e}t}}^{1}(\mathfrak{C},L)$ (see e.g. \cite[\S 1.2]{Dub11}). 

\end{parn}

\begin{parn} So Proposition \ref{prop:Factor} implies in particular
that $\rho_{R}:S_{R}\rightarrow\mathfrak{C}$ and $\rho_{F}:S_{F}\rightarrow\mathfrak{C}$
can be equipped with the structure of principal homogeneous bundles
under suitable line bundles $\mathrm{p}_{R}:L_{R}\rightarrow\mathfrak{C}$
and $\mathrm{p}_{F}:L_{F}\rightarrow\mathfrak{C}$ respectively. As
a consequence, the fiber product $Z=S_{R}\times_{\mathfrak{C}}S_{F}$
is simultaneously equipped via the first and second projection with
the structure of a principal homogeneous bundle under the line bundles
$\rho_{R}^{*}L_{F}$ and $\rho_{F}^{*}L_{R}$ respectively. But since
$S_{R}$ and $S_{F}$ are both affine, the vanishing of $H_{\mathrm{\acute{e}t}}^{1}(S_{R},\rho_{R}^{*}L_{F})$
and $H_{\mathrm{\acute{e}t}}^{1}(S_{F},\rho_{F}^{*}L_{R})$ implies
that $\mathrm{pr}_{1}:Z\rightarrow S_{R}$ and $\mathrm{pr}_{2}:Z\rightarrow S_{F}$
are the trivial $\rho_{R}^{*}L_{F}$-torsor and $\rho_{F}^{*}L_{R}$-torsor
respectively. In other word, $Z$ carries simultaneously the structure
of a line bundle over $S_{R}$ and $S_{F}$. 

\end{parn}

\begin{parn} \label{par:end_of_proof} Now since $S_{F}$ is $1$-flexible
by virtue of Theorem \ref{thm:Giz-Flex}, we deduce from Lemma \ref{lem:Flex-lift}
that $Z$ is $1$-flexible. Furthermore, the same Lemma implies that
given any line bundle $\mathrm{p}:Z'\rightarrow S_{R}$, the total
space of the rank $2$ vector bundle $\mathrm{pr}_{1}\times\mathrm{p}:E=Z'\times_{X}Z\rightarrow S_{R}$
over $S_{R}$ is $1$-flexible. On the other hand, it follows from
\cite[Theorem 3.2]{Mur69} that every rank $2$ vector bundle $E\rightarrow S_{R}$
splits a trivial factor, whence is isomorphic to the direct sum of
its determinant $\det E$ and of the trivial line bundle. Choosing
for $Z'$ a line bundle representing the inverse of the class of $\mathrm{pr}_{1}:Z\rightarrow S_{R}$
in the Picard group $\mathrm{Pic}(S_{R})$ of $S_{R}$ yields a vector
bundle $E=Z'\times_{X}Z\rightarrow S_{R}$ with trivial determinant,
whence isomorphic to the trivial one $S_{R}\times\mathbb{A}^{2}$,
and with $1$-flexible total space. 

\end{parn}

\subsection{Proof of Proposition \ref{prop:Factor} }

\begin{parn} To prove Proposition \ref{prop:Factor}, we first observe
that if it exists, an algebraic space $\delta:\mathfrak{C}\rightarrow\mathbb{P}^{1}$
with the property that a given $\mathbb{A}^{1}$-fibration $\pi:S\rightarrow\mathbb{P}^{1}$
on a smooth surface $S$ factors as $\delta\circ\rho$, where $\rho:S\rightarrow\mathfrak{C}$
is an \'etale locally trivial $\mathbb{A}^{1}$-bundle, is unique
up to isomorphism of spaces over $\mathbb{P}^{1}$. Indeed, suppose
that $\delta':\mathfrak{C}\rightarrow\mathbb{P}^{1}$ is another such
space for which we have $\pi=\delta'\circ\rho'$ where $\rho':S'\rightarrow\mathfrak{C}'$
is an \'etale locally trivial $\mathbb{A}^{1}$-bundle. The closed
fibers of $\rho$ and $\rho'$ being both in one-to-one correspondence
with irreducible components of closed fibers of $\pi$, it follows
that for every closed point $c\in\mathfrak{C}$ there exists a unique
closed point $c'\in\mathfrak{C}'$ such that $\delta(c)=\delta'(c')$
and $\rho^{-1}(c)=(\rho')^{-1}(c')\subset\pi^{-1}(\delta(c))$. So
the correspondence $c\mapsto c'$ defines a bijection $\psi:\mathfrak{C}\rightarrow\mathfrak{C}'$
such that $\rho'=\psi\circ\rho$ and $\delta=\delta'\circ\psi$. Letting
$f:C\rightarrow\mathfrak{C}$ be an \'etale cover over which $\rho:S\rightarrow\mathfrak{C}$
becomes trivial, say with isomorphism $\theta:S\times_{\mathfrak{C}}C\stackrel{\sim}{\rightarrow}C\times\mathbb{A}^{1}$,
and choosing a section $\sigma:C\rightarrow C\times\mathbb{A}^{1}$
of $\mathrm{pr}_{C}:C\times\mathbb{A}^{1}\rightarrow C$, the composition
$\psi\circ f=\psi\circ f\circ\mathrm{pr}_{C}\circ\sigma$ is equal
to $\psi\circ\rho\circ\mathrm{pr}_{1}\circ\theta^{-1}\circ\sigma$
and hence to $\rho'\circ\mathrm{pr}_{1}\circ\theta^{-1}\circ\sigma$
by construction of $\psi$. \[\xymatrix{ C\times \mathbb{A}^1 \ar[r]^{\theta^{-1}} \ar[dr]^{\mathrm{pr}_C} & S\times_{\mathfrak{C}}C \ar[r]^{\mathrm{pr}_1} \ar[d]^-{\mathrm{pr}_2} & S \ar[d]^{\rho} \ar[dr]^{\rho'}& \\ & C \ar@<1ex>[ul]^{\sigma} \ar[r]^f & \mathfrak{C} \ar[r]^{\psi} & \mathfrak{C}' }\]This
implies that $\psi\circ f:C\rightarrow\mathfrak{C}'$ is a morphism
whence that $\psi:\mathfrak{C}\rightarrow\mathfrak{C}'$ is a morphism
since being a morphism is a local property with respect to the \'etale
topology. The same argument on an \'etale cover $f':C'\rightarrow\mathfrak{C}'$
over which $\rho':S'\rightarrow\mathfrak{C}'$ becomes trivial implies
that the set-theoretic inverse $\psi^{-1}$ of $\psi$ is also a morphism,
and so $\psi:\mathfrak{C}\rightarrow\mathfrak{C}'$ is an isomorphism
of spaces over $\mathbb{P}^{1}$. 

\end{parn}

\begin{parn} In what follows, given an $\mathbb{A}^{1}$-fibered
surface $\pi:S\rightarrow\mathbb{P}^{1}$, we use the notation $S/\mathbb{A}^{1}$
to refer to an algebraic space $\delta:\mathfrak{C}\rightarrow\mathbb{P}^{1}$
with the property that $\pi$ factors through an \'etale locally
trivial $\mathbb{A}^{1}$-bundle $\rho:S\rightarrow\mathfrak{C}$.
The previous observation implies that its existence is a local problem
with respect to the Zariski topology on $\mathbb{P}^{1}$. More precisely,
we may cover $\mathbb{P}^{1}$ by finitely many affine open subsets
$U_{i}$, $i=1,\ldots,r$ over which the restriction of $\pi:S\rightarrow\mathbb{P}^{1}$
is an $\mathbb{A}^{1}$-fibration with a most a degenerate fiber,
say $\pi^{-1}(p_{i})$ for some $p_{i}\in U_{i}$. Since the restriction
of $\pi$ over $U_{i,*}=U_{i}\setminus\left\{ p_{i}\right\} $ is
then a Zariski locally trivial $\mathbb{A}^{1}$-bundle, we see that
if $\delta_{i}:\mathfrak{C}_{i}=\pi^{-1}(U_{i})/\mathbb{A}^{1}\rightarrow U_{i}$
exists then the restriction of $\delta_{i}$ over $U_{i}\setminus\left\{ p_{i}\right\} $
is an isomorphism of schemes over $U_{i}$. This implies that the
isomorphisms $\delta_{j}^{-1}\circ\delta_{i}\mid_{\delta^{-1}(U_{i,*}\cap U_{j,*})}:\delta_{i}^{-1}(U_{i,*}\cap U_{j,*})\rightarrow\delta_{j}^{-1}(U_{i,*}\cap U_{j,*})$,
$i,j=1,\ldots,r$, satisfy the usual cocyle condition on triple intersections
whence that the algebraic space $\delta:\mathfrak{C}=S/\mathbb{A}^{1}\rightarrow\mathbb{P}^{1}$
with the desired property is obtained by gluing the local ones $\delta_{i}:\mathfrak{C}_{i}\rightarrow U_{i}$,
$i=1,\ldots,r$ along their respective open sub-schemes $\delta_{i}^{-1}(U_{i,*}\cap U_{j,*})\subset\mathfrak{C}_{i}$,
$i,j=1,\ldots,r$ via these isomorphisms. 

\end{parn}

\begin{parn} Now we turn more specifically to the case of the $\mathbb{A}^{1}$-fibrations
$\pi_{R}:S_{R}\rightarrow\mathbb{P}^{1}$ and $\pi_{F}:S_{F}\rightarrow\mathbb{P}^{1}$
constructed in \S \ref{par:Sr-Fib} and \S \ref{par:Sf_Fib} respectively.
Both have exactly two degenerate fibers, one irreducible of multiplicity
three and the other one consisting of the disjoint union of five reduced
curves. So up to an automorphism of $\mathbb{P}^{1}$ we may choose
a pair of distinct point $p_{0},p_{\infty}\in\mathbb{P}^{1}$ such
that $\pi_{R}^{-1}(p_{0})=3T\cap S_{R}$, $\pi_{F}^{-1}(p_{0})=3G_{0,1}\cap S_{F}$,
$\pi_{R}^{-1}(p_{\infty})=\bigsqcup_{i=1}^{5}F_{i}\cap S_{R}$ and
$\pi_{F}^{-1}(p_{\infty})=\bigsqcup_{i=1}^{5}G_{\infty,i}\cap S_{F}$.
Letting $U_{0}=\mathbb{P}^{1}\setminus\{p_{\infty}\}$ and $U_{\infty}=\mathbb{P}^{1}\setminus\{p_{0}\}$,
the existence and isomorphy of the algebraic spaces $\pi_{R}^{-1}(U_{0})/\mathbb{A}^{1}$
and $\pi_{F}^{-1}(U_{0})/\mathbb{A}^{1}$ (resp. $\pi_{R}^{-1}(U_{\infty})/\mathbb{A}^{1}$
and $\pi_{F}^{-1}(U_{\infty})/\mathbb{A}^{1}$) hence of those $S_{R}/\mathbb{A}^{1}$
and $S_{F}/\mathbb{A}^{1}$, follows from a reinterpretation of a
description due to Fieseler \cite{Fie94}: 

- Since the unique degenerate fiber of the restriction of $\pi_{R}$
(resp. $\pi_{F}$) over $U_{\infty}$ is reduced, consisting of five
irreducible components, $\pi_{R}^{-1}(U_{\infty})/\mathbb{A}^{1}$
and $\pi_{F}^{-1}(U_{\infty})/\mathbb{A}^{1}$ are isomorphic to the
scheme $\delta_{\infty}:\mathfrak{C}_{\infty}\rightarrow U_{\infty}$
obtained from $U_{\infty}$ by replacing the point $p_{\infty}$ by
five copies $p_{\infty,1},\ldots,p_{\infty,5}$ of itself, one for
each irreducible component of $\pi_{R}^{-1}(p_{\infty})$ (resp. $\pi_{F}^{-1}(p_{\infty})$).
More explicitly, $\pi_{R}$ restricts on $S_{R,\infty,i}=\pi_{R}^{-1}(U_{\infty})\setminus\bigsqcup_{j\neq i}(F_{j}\cap S_{R})$,
$i=1,\ldots,5$, to an $\mathbb{A}^{1}$-fibration $\pi_{R,\infty,i}:S_{R,\infty,i}\rightarrow U_{\infty}$
with no degenerate fiber over the factorial base $U_{\infty}\simeq\mathbb{A}^{1}$
and hence is a trivial $\mathbb{A}^{1}$-bundle. So $S_{R,\infty,i}/\mathbb{A}^{1}\simeq U_{\infty}$
and $\pi_{R}^{-1}(U_{\infty})/\mathbb{A}^{1}$ is thus isomorphic
to the $U_{\infty}$-scheme obtained by gluing $r$-copies $\delta_{\infty,i}:U_{\infty,i}\stackrel{\sim}{\rightarrow}U_{\infty}$,
$i=1,\ldots,5$, of $U_{\infty}$ by the identity along the open subsets
$U_{\infty,i}\setminus\{p_{\infty,i}\}$, where $p_{\infty,i}=\delta_{\infty,i}^{-1}(p_{\infty})$.
The same description holds for $\pi_{F}^{-1}(U_{\infty})/\mathbb{A}^{1}$. 

- The situation for the open subsets $S_{R,0}=\pi_{R}^{-1}(U_{0})$
and $S_{F,0}=\pi_{F}^{-1}(U_{0})$ is a little more complicated. Letting
$g:\tilde{U}_{0}\rightarrow U_{0}$ be a Galois cover of order three
ramified over $p_{0}$ and \'etale everywhere else, the inverse image
of $\pi_{F}^{-1}(p_{0})_{\mathrm{red}}$ in the normalization $\tilde{S}_{R,0}$
of the reduced fiber product $(S_{R}\times_{U_{0}}\tilde{U}_{0})_{\mathrm{red}}$
is the disjoint union of three curves $\ell_{0,1}$, $\ell_{0,\varepsilon}$,
and $\ell_{0,\varepsilon^{2}}$ (where $\varepsilon\in\mathbb{C}^{*}$
is a primitive cubic root of unity) which are permuted by the action
of the Galois group $\mu_{3}$ of cubic roots of unity. The $\mathbb{A}^{1}$-fibration
$\mathrm{pr}_{1}:S_{R,0}\times_{U_{0}}\tilde{U}_{0}\rightarrow U'_{0}$
lifts to one $\tilde{\pi}{}_{R,0}:\tilde{S}_{R,0}\rightarrow\tilde{U}_{0}$
with a unique, reduced, degenerate fiber $(\pi')_{R,0}^{-1}(\tilde{p}_{0})$,
where $\tilde{p}_{0}=g^{-1}(p_{0})$, which consists of the union
of the $\ell_{0,\alpha}$, $\alpha=1,\varepsilon,\varepsilon^{2}$.
The same argument as in the previous case implies then that $\tilde{\mathfrak{C}}_{0}=\tilde{S}_{R,0}/\mathbb{A}^{1}$
is isomorphic to the $\tilde{U}_{0}$-scheme $\tilde{\delta}_{0}:\tilde{\mathfrak{C}}_{0}\rightarrow\tilde{U}_{0}$
obtained by gluing three copies $\tilde{\delta}{}_{0,\alpha}:\tilde{U}{}_{0,\alpha}\stackrel{\sim}{\rightarrow}\tilde{U}_{0}$,
$\alpha=1,\varepsilon,\varepsilon^{2}$, of $\tilde{U}_{0}$ by the
identity outside the points $\tilde{p}{}_{0,\alpha}=(\tilde{\delta}{}_{0,\alpha})^{-1}(\tilde{p}{}_{0})$.
Furthermore, the action of the Galois group $\mu_{3}$ on $\tilde{S}{}_{R,0}$
descends to a fixed point free action on $\tilde{\mathfrak{C}}_{0}$
defined locally by $\tilde{U}{}_{0,\alpha}\ni\tilde{p}\mapsto\varepsilon\cdot\tilde{p}\in\tilde{U}{}_{0,\varepsilon\alpha}$.
A geometric quotient for this action on $\tilde{\mathfrak{C}}{}_{0}$
exists in the category of algebraic spaces in the form of an \'etale
$\mu_{3}$-torsor $\tilde{\mathfrak{C}}_{0}\rightarrow\tilde{\mathfrak{C}}_{0}/\mu_{3}$
over a certain algebraic space $\tilde{\mathfrak{C}}_{0}/\mu_{3}$
and we obtain a commutative diagram \[\xymatrix{\tilde{S}_{R,0} \ar[r] \ar[d]_{\tilde{\rho}_{R,0}} & \tilde{S}_{R,0}/\mu_3\simeq S_{R,0} \ar[d]^{\rho_{R,0}} \\ \tilde{\mathfrak{C}}_{0} \ar[r] \ar[d]_{\tilde{\delta}_0} & \tilde{\mathfrak{C}}_{0}/\mu_3 \ar[d]^{\delta_0} \\ \tilde{U}_0 \ar[r] & \tilde{U}_0/\mu_3\simeq U_0 }\]in
which the top square is cartesian. It follows that the induced morphism
$\rho_{R,0}:S_{R,0}\rightarrow\tilde{\mathfrak{C}}_{0}/\mu_{3}$ is
an \'etale locally trivial $\mathbb{A}^{1}$-bundle which factors
the restriction of $\pi_{R}$ to $S_{R,0}$. So $\delta_{0}:\tilde{\mathfrak{C}}_{0}/\mu_{3}\rightarrow U_{0}$
is the desired algebraic space $S_{R,0}/\mathbb{A}^{1}$.\\

It is clear from the construction that the isomorphy type of $\tilde{\mathfrak{C}}_{0}/\mu_{3}$
as a space over $U_{0}$ depends only on the fact that $S_{R,0}$
is smooth and that $\pi_{R}\mid_{S_{R,0}}:S_{R,0}\rightarrow U_{0}$
is an $\mathbb{A}^{1}$-fibration with a unique degenerate fiber of
multiplicity three over $p_{0}$, and not on the full isomorphy type
of $S_{R,0}$ as a scheme over $U_{0}$. In other word, the same construction
applied $\pi_{F}\mid_{S_{F,0}}:S_{F,0}\rightarrow U_{0}$ yields an
algebraic space $S_{F,0}/\mathbb{A}^{1}$ which is isomorphic to $\tilde{\mathfrak{C}}_{0}/\mu_{3}$
as spaces over $U_{0}$. \\

Finally, the desired algebraic space $\mathfrak{C}=S_{R}/\mathbb{A}_{1}=S_{F}/\mathbb{A}_{1}$
is obtained by gluing $\mathfrak{C}_{\infty}$ and $\mathfrak{C}_{0}=\tilde{\mathfrak{C}}_{0}/\mu_{3}$
by the identity along the open sub-schemes $\delta_{\infty}^{-1}(U_{0}\cap U_{\infty})\simeq U_{0}\cap U_{\infty}\simeq\delta_{0}^{-1}(U_{0}\cap U_{\infty})$.
This completes the proof of Proposition \ref{prop:Factor}. 

\end{parn}
\begin{rem}
A similar construction applies to the $\mathbb{A}^{1}$-fibrations
$\pi'_{R}:S_{R}\rightarrow\mathbb{P}^{1}$ and $\pi'_{F}:S_{F}\rightarrow\mathbb{P}^{1}$
considered in remarks \ref{Rk:Alt-SrFib} and \ref{rk:Alt-SfFib}
respectively. The desired algebraic space $\mathfrak{C}'=S_{R}/\mathbb{A}^{1}=S_{F}/\mathbb{A}^{1}$
is again obtained as the gluing by the identity along ${\delta'}_{\infty}^{-1}(U_{0}\cap U_{\infty})\simeq U_{0}\cap U_{\infty}\simeq{\delta'}_{0}^{-1}(U_{0}\cap U_{\infty})$
of two algebraic spaces $\delta'_{\infty}:\mathfrak{C}'_{\infty}\rightarrow U_{\infty}$
and $\delta'_{0}:\mathfrak{C}'_{0}\rightarrow U_{0}$ which are constructed
as follows:

- The algebraic space $\mathfrak{C}'_{\infty}$ is obtained from $U_{\infty}$
be replacing the point $p_{\infty}$ by four copies of itself, one
for each irreducible component in the reduced degenerate fiber ${\pi'}_{R}^{-1}(p_{\infty})$
(resp. ${\pi'}_{F}^{-1}(p_{\infty})$).

- Corresponding to the fact that the degenerate fiber ${\pi'}_{R}^{-1}(p_{0})$
(resp. ${\pi'}_{F}^{-1}(p_{0})$) has two irreducible components,
both occurring with multiplicity two, the algebraic space $\mathfrak{C}'_{0}$
is now itself a compound object. First we let $g:\tilde{U}_{0}\rightarrow U_{0}$
be Galois cover of degree two ramified at $p_{0}$ and \'etale elsewhere.
Then we let $\tilde{\mathfrak{D}}'_{0}\rightarrow\tilde{U}_{0}$ be
the scheme obtained by gluing two copies $\tilde{U}_{0,\pm}$ of $\tilde{U}_{0}$
by the identity outside $\tilde{p}_{0}=g^{-1}(p_{0})$. The Galois
group $\mu_{2}$ acts freely on $\tilde{\mathfrak{D}}'_{0}$ by $\tilde{U}_{0,\pm}\ni\tilde{p}\mapsto-\tilde{p}\in\tilde{U}_{0,\mp}$
and we let $\gamma_{0}':\mathfrak{D}'_{0}=\tilde{\mathfrak{D}}'_{0}/\mu_{2}\rightarrow U_{0}\simeq\tilde{U}_{0}/\mu_{2}$
be the geometric quotient taken in the category of algebraic spaces.
Finally, $\delta'_{0}:\mathfrak{C}'_{0}\rightarrow U_{0}$ is obtained
by gluing two copies $\gamma'_{0,i}:\mathfrak{D}'_{0,i}\rightarrow U_{0}$,
$i=1,2$ of $\mathfrak{D}'_{0}$ by the identity along the open subschemes
${\gamma'_{0,1}}^{-1}(U_{0}\setminus\{p_{0}\})\simeq U_{0}\setminus\{p_{0}\}\simeq{\gamma'_{0,2}}^{-1}(U_{0}\setminus\{p_{0}\})$. 
\end{rem}
\bibliographystyle{amsplain}

\end{document}